\documentclass[12pt,a4paper]{article}

\usepackage{amsmath,amssymb,amsthm,latexsym,color,times,mathrsfs}
\usepackage{tikz,fullpage}
\usepackage{xcolor,colortbl}
\usetikzlibrary{mindmap}
\usepackage[bookmarks=false,pdfstartview=FitH,
hyperfigures=true,colorlinks=true,citecolor=blue,
linkcolor=blue]{hyperref}
\usetikzlibrary{shapes,arrows,shadows,snakes}%
\usetikzlibrary{calc,positioning}%
\usepackage{pagecolor}

\def\pf{\noindent {\em Proof.}\ }
\def\qed{{\quad\rule{1mm}{3mm}\,}}
\def\JS{\mathscr{J\!\!S}}
\def\WD{\mathscr{W\!\!D}}
\def\le{\leqslant}
\def\ge{\geqslant}
\newtheorem{thm}{Theorem}
\newtheorem{lmm}{Lemma}

\newtheorem{pro}{Proposition}
\theoremstyle{remark}\newtheorem{Rem}{Remark}
\theoremstyle{remark}\newtheorem{Def}{Definition}

\definecolor{brown}{rgb}{0.5,0.25,0}
\definecolor{dg}{rgb}{0,0.3,0}
\definecolor{gold}{rgb}{1,0.84,0.}

\def\CTLS{\textbf{\texttt{CTLS}}}
\def\MGLS{\textbf{\texttt{MGLS}}}
\def\RPSLS{\textbf{\texttt{RPSLS}}}
\def\GJLS{\textbf{\texttt{GJLS}}}

\title{From coin-tossing to rock-paper-scissors and beyond:\\
       A log-exp gap theorem for selecting a leader}
\author{Michael FUCHS\\
    Department of Applied Mathematics \\
    National Chiao Tung University \\
    Hsinchu 300 \\ Taiwan
\and Hsien-Kuei HWANG \\
    Institute of Statistical Science \\
    Academia Sinica\\
    Taipei 115\\ Taiwan
\and Yoshiaki ITOH \\
    Institute of Statistical Mathematics \\
    10-3 Midori-cho, Tachikawa \\
    Tokyo 190-8562 \\ Japan
}

\begin{document}
\maketitle
\pagecolor{yellow!10}

{\renewcommand{\thefootnote}{}
\footnotetext{\hspace*{-0.6cm}
{\emph{Key words}:} Leader selection, Janken game,
recurrence relation, functional equation, Mellin transform,
saddle-point method, limit theorems, periodic function,
gap theorem, ties.\\
\emph{2010 Mathematics Subject Classification}: 05A16, 60F05,
68W40.}}

\begin{abstract}

A class of games for finding a leader among a group of candidates is
studied in detail. This class covers games based on coin-tossing and rock-paper-scissors as special cases
and its complexity exhibits similar stochastic behaviors: either of logarithmic mean and
bounded variance or of exponential mean and exponential variance. Many applications
are also discussed.

\end{abstract}

\section{Introduction}\label{intro}

Selecting a leader or a representative by fair random mechanisms
without relying on \emph{a priori} information of the candidates has
long been used in diverse contexts and civilizations. Typical
examples range from sortition (or allotment) in the West and
rock-paper-scissors (or Janken) in the East; see Wikipedia's pages on
sortition and rock-paper-scissors for more information. This paper is
concerned with the analysis of a class of \emph{leader selection
algorithms} (or \emph{leader election algorithms}) that are used to
select a leader among a group of $n$ candidates. These algorithms
have widespread applications in diverse areas; see Figure \ref{fig-1}
for a summary and below for more descriptions.

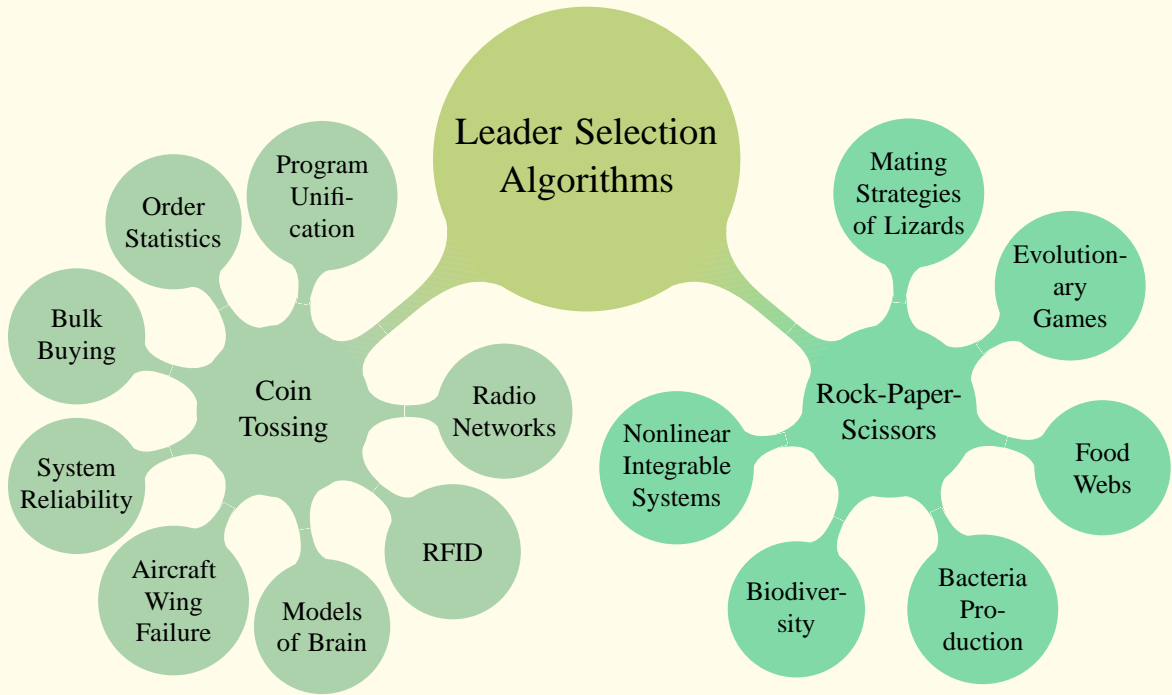
\begin{figure}
\begin{center}
\begin{tikzpicture}
\path[mindmap, concept color=dg!50!yellow!50, text=black,
level 1 concept/.append style={level distance=52mm,
sibling angle=100}, level 2 concept/.append
style={sibling angle=40}]
node[concept] {Leader Selection Algorithms}
[clockwise from=-40]
child[concept color=blue!30!green!50, text=black] {
  node[concept] {Rock-Paper-Scissors}
  [clockwise from=85,
  level 2 concept/.append style={sibling angle=50}]
  child {node[concept] {Mating Strategies of Lizards}}
  child {node[concept] {Evolution- \\ary Games}}
  child {node[concept] {Food Webs}}
  child {node[concept] {Bacteria Production}}
  child {node[concept] {Biodiver- \\sity}}
  child {node[concept] {Nonlinear Integrable Systems}}}
child[concept color=green!30!gray!50, text=black] {
  node[concept] {Coin Tossing}
  [clockwise from=0]
  child{node[concept] {Radio Networks}}
  child{node[concept] {RFID}}
  child{node[concept] {Models of Brain}}
  child{node[concept] {Aircraft Wing Failure}}
  child{node[concept] {System Reliability}}
  child{node[concept] {Bulk Buying}}
  child{node[concept] {Order Statistics}}
  child{node[concept] {Program Unification}}};
\end{tikzpicture}
\end{center}
\caption{Leader selection procedures and
applications.}\label{fig-1}
\end{figure}

One easy and efficient way to solve the leader selection problem is
to use coin-tossing. A simple such procedure is described as follows.
Assume that we have a (possibly biased) coin with two outcomes
``head" and ``tail". Each of the $n$ candidates tosses an independent
coin and those who toss head go on to the next round. In case nobody
tosses head, the round is repeated with the same candidates. The
procedure ends when only one candidate is left who is then declared
the leader. We refer to this simple scheme as \CTLS\ (Coin-Tossing
Leader Selection). It has been used in many applications, for
instance, in the CTM Tree Protocol (after Capetanakis, Tsybakov and
Mikhailov), which is used to determine the order in which $n$
processors sharing a common communication channel send their
messages; see \cite{Ca} and \cite{TsMi}. More recent applications are
in ad-hoc radio networks (see Chapter 9 of \cite{NaOl}) and RFID
systems (see, e.g., \cite{HuWo}).

Many variants of \CTLS\ have been proposed and extensively studied.
One such variant consists of waiting until everyone is eliminated and
then declaring those who stayed the longest in the game as the
leaders. This variant, referred to as \MGLS\ (Maximum Geometric
Leader Selection), amounts to finding the maximum of $n$ i.i.d.\
geometric distributed random variables. The latter problem has a
longer history than the study of \CTLS, and the earliest publication
we found dated back to the early 1950s. Closely connected
applications include mathematical models of the brain \cite{voNe},
study of aircraft wing fatigue failure \cite{Lu} and system
reliability in general \cite{We}, order statistics of geometric
random variables \cite{MaWi}, skip-lists \cite{Devroye92}, bulk
buying of possibly defective items \cite{Sp,Ke} and program
unification techniques in concurrency enhancement methods
\cite{ReMa,ReSz}; see also the more recent study \cite{Ei} and the
book on bioinformatics \cite{EwGr} for further applications.

The complexity of \CTLS\ seemed to have been first analyzed by Bruss
and O'Cinneide \cite{BrOc} in the early 1990s, where they attributed
the problem of analyzing the number of rounds to identify a leader to
Bajaj and Mendieta \cite{BaMa}. In fact, in \cite{BrOc}, the authors
relied on the above connection to geometric random variables as an
approximation to \CTLS, and gave an analysis of \MGLS\ as well. A
more detailed analysis of \CTLS\ was given independently by Prodinger
\cite{Pr}. In particular, he showed that the expected number of
rounds, denoted by $\mathbb{E}(X_n)$, used by \CTLS\ to identify a
leader among $n$ candidates satisfies
\begin{align}\label{EXn-logn}
    \mathbb{E}(X_n)
	=\log_2n+\tfrac{1}{2}
	+P_0(\log_2n)+O\left(n^{-1}\right),
\end{align}
when all coins tossed are unbiased. Here $P_0(u)$ is a bounded
periodic function of period $1$ with amplitude less than $1.93\times
10^{-5}$. Such a minute yet nonzero periodic oscillation is a
characteristic feature for problems of a similar type, and tools for
deriving the corresponding Fourier expansions have been the major
focus of many papers. A similar periodic phenomena is also present in
the variance $\mathbb{V}(X_n)$, which is asymptotic to another
periodic function and thus of constant order (again in the unbiased
case). This together with the following limit distribution result
\begin{equation}\label{unbiased-ll}
    \mathbb{P}(X_n\le\lfloor\log_2n\rfloor+\ell)
    =\frac{2^{\{\log_2 n\}-\ell}}
    {\exp(2^{\{\log_2 n\}-\ell})-1}+O\left(n^{-1}\right),
\end{equation}
where $\{x\}$ denotes the fractional part of $x$, was derived in
\cite{FiMaSz}. Moreover, extensions of the above results to biased
coins were also considered; see \cite{JaSz}.

The logarithmic order \eqref{EXn-logn} shows that \CTLS\ is not only
simple but also very efficient in terms of cost complexity. We will
establish a more general asymptotic pattern of this type and clarify
when a leader selection procedure is more efficient than the others.

In addition to coin-tossing with binary outcomes, the natural idea of
allowing $m$-ary outcomes ($m\ge3$) with cyclic dominance has also
proved fruitful in diverse applications including leader selection.
The simplest such procedure is the ``rock-paper-scissors game" (RPS),
which is popular in many countries, notably in Japan, where it is
called ``Janken game", meaning the play between two fists. Many
annual tournaments and championships of such games on regional and
international level are held, and have received widespread media
attention. RPS originated in China and was imported to Japan about
a thousand years ago (with different rules).
Then it was modified to the current form of RPS about a century ago,
and its use spread later across Asia and to the western and the whole
world. See \cite{Li,Li95,OKSY98} for a detailed account. The rules
underlying RPS can be visualized by the following digraph, which
indicates the dominance relations:

\begin{center}
\begin{tikzpicture}[oval2/.style={ellipse, fill=blue!55!green!30,
rounded corners, ,minimum width=2.3cm, drop shadow}]
\draw (0cm,0cm) node[oval2] (1) {rock};
\draw (2cm,2cm) node[oval2] (2) {paper};
\draw (4cm,0cm) node[oval2] (3) {scissors};
\draw[shorten <=7pt,shorten >=7pt,-latex,line width=0.4mm]
(1) -- (3);
\draw[shorten <=7pt,shorten >=7pt,-latex,line width=0.4mm]
(3) -- (2);
\draw[shorten <=7pt,shorten >=7pt,-latex,line width=0.4mm]
(2) -- (1);
\end{tikzpicture}
\end{center}

Many variants of RPS exist. Examples include the
rock-paper-scissors-well game (popular in Germany), and the
rock-paper-scissors-Spock-lizard game; see the figure below and
Wikipedia's page on RPS for more information.

\begin{figure}
\begin{center}
\begin{tikzpicture}
\node[] (left) at (0,0.1){
\begin{tikzpicture}[scale=1.3]
\draw (-8.21cm,3.51cm) node (11) {R};
\draw (-8.21cm,0.8cm) node (12) {P};
\draw (-5.5cm,0.8cm) node (13) {S};
\draw (-5.5cm,3.51cm) node (14) {W};

\draw[-latex,line width=0.4mm] (11) -- (13);
\draw[-latex,line width=0.4mm] (12) -- (11);
\draw[-latex,line width=0.4mm] (12) -- (14);
\draw[-latex,line width=0.4mm] (13) -- (12);
\draw[-latex,line width=0.4mm] (14) -- (11);
\draw[-latex,line width=0.4mm] (14) -- (13);
\end{tikzpicture}
};

\node[] (right) at (6,0.2){
\begin{tikzpicture}
\draw (0cm,4.31cm) node (1) {R};
\draw (-2.27cm,2.66cm) node (2) {P};
\draw (-1.4cm,0cm) node (3) {Sc};
\draw (1.4cm, 0cm) node (4) {Sp};
\draw (2.27cm,2.66cm) node (5) {L};

\draw[-latex,line width=0.4mm] (1) -- (3);
\draw[-latex,line width=0.4mm] (1) -- (5);
\draw[-latex,line width=0.4mm] (2) -- (1);
\draw[-latex,line width=0.4mm] (2) -- (4);
\draw[-latex,line width=0.4mm] (3) -- (2);
\draw[-latex,line width=0.4mm] (3) -- (5);
\draw[-latex,line width=0.4mm] (4) -- (1);
\draw[-latex,line width=0.4mm] (4) -- (3);
\draw[-latex,line width=0.4mm] (5) -- (2);
\draw[-latex,line width=0.4mm] (5) -- (4);
\end{tikzpicture}
};

\node[yshift=-2.75cm] (L1cap) at (left)
{\small Rock, Paper, Scissors, Well};
\node[yshift=-0.5cm] (L2cap) at (L1cap) {\small (Germany)};
\node[xshift=6cm] (R1cap) at (L1cap)
{\small Rock, Paper, Scissors,};
\node[yshift=-0.5cm] (R2cap) at (R1cap) {\small Spock, Lizard};
\end{tikzpicture}
\end{center}
\end{figure}

The usefulness of the Janken game and its variants is not limited to
select a winner or a loser, but also to broad applications in other
areas. Examples are found in evolutionary game theory (see
\cite{HoSi,Sm}) and in biology (the field data on alternative male
strategies of side-blotched-lizards \cite{SiLi} being a well-known
example). Other biological uses are found in food-webs, antibiotic
production of bacteria, and biodiversity. In physical applications,
Janken games were also encountered in interacting particle systems
with cyclic dominance \cite{It1} which have a Lotka-Volterra system
as a deterministic approximation. These systems can be extended to
nonlinear integrable systems; see \cite{Bo,It2,It3}. The introduction
of the spatial structure as lattice Lotka-Volterra system drastically
enriches the dynamics of interacting particles systems and yields
interesting simulation results; see \cite{FrKr,KnKrWeFr,SzFa,Ta}.

When used to select a leader among $n$ candidates, it turns out that
RPS is very inefficient in that it requires an \emph{exponential}
number of rounds to resolve the overwhelming ties, in contrast to the
logarithmic complexity \eqref{EXn-logn} for \CTLS. More precisely,
the procedure follows along the natural way and if only two different
hands are present, then the group whose hand dominates that of the
other will go on to the next stage; otherwise, the game is in a tie
and has to be repeated. The game ends when only one candidate is left
who is then the leader. We will call this procedure \RPSLS.

Maehara and Ueda \cite{MaUe} proved that when $p_1=p_2=p_3=\frac13$,
the number of rounds, say $Y_n$, used by \RPSLS\ satisfies
\[
    \mathbb{E}(Y_n)
	\sim\tfrac13\left(\tfrac32\right)^n,
\]
(cf. \eqref{EXn-logn}) and, furthermore,
\[
    \frac{Y_n}{\frac13\left(\frac32\right)^n}
	\stackrel{d}{\longrightarrow}\text{Exp}(1),
\]
where $\stackrel{d}{\longrightarrow}$ denotes convergence in
distribution and $\text{Exp}(1)$ represents an exponential
random variable with mean $1$. While the expected
exponential complexity of \RPSLS\ under uniform distribution
of the hands cannot compete with the expected logarithmic
one of \CTLS, for very small $n$, we have the following
numerical values:

\begin{center}
    \begin{tabular}{|c||c|c|c|c|c|c|c|c|}\hline
        Scheme &$n$ & $2$ & $3$ & $4$ & $5$ & $6$
        & $7$ & $8$ \\ \hline\hline
        \CTLS &$\mathbb{E}(X_n)$ &
        $4$ & $4.83$ & $5.52$ & $6.09$
		& $6.58$
        & \cellcolor{blue!15}$6.99$
		& \cellcolor{blue!15}$7.35$ \\ \hline
        \RPSLS & $\mathbb{E}(Y_n)$ & \cellcolor{blue!15}$1.5$
		& \cellcolor{blue!15}$2.25$
		& \cellcolor{blue!15}$3.21$
		& \cellcolor{blue!15}$4.49$
		& \cellcolor{blue!15}$6.22$ & $8.65$
        & $12.1$ \\ \hline
    \end{tabular}
\end{center}
and we see that in terms of expected number of rounds \RPSLS\
is more efficient than \CTLS\ when $n\le 6$.

Extensions of \RPSLS\ to more hands than three were also
studied in distributed computing contexts by Suzaki and Osaki
in several papers (see \cite{SuOs2,SuOs3,SuOs}), where partial
probabilistic analyses are provided; see also Section~\ref{examples}.

Our aim in this paper is to propose a general model containing all
the schemes above as special cases. More precisely, we will define a
wide class of generalized Janken games with $m\ge 2$ hands, and we
will show that the different behaviors mentioned above for \CTLS\ and
\RPSLS\ are prototypical and find their extensive form in a general
setting. More precisely, we will show that the complexity of leader
selection based on general Janken schemes exhibits a gap theorem: the
average number of rounds needed to select a leader is either of
\emph{logarithmic} order or of \emph{exponential} order. Moreover, we
will establish stronger results including bounded variance and the
oscillations of the asymptotic distribution in the log-case, and an
exponential limit law in the exp-case.

This paper is structured as follows. In the next section, we
define our generalized Janken games and provide general tools for
analyzing the number of rounds until a leader is selected. In
Section~\ref{examples}, we apply our results to some games.
Finally, in Section~\ref{ext}, we discuss extensions and other
gap theorems in our model. We conclude the paper with a remark
on infinite-hand games.

\section{Dichotomous behavior of the complexity}

We define first the class of generalized Janken games we will
analyze. Then we distinguish between two subclasses of games
(log-games and exp-games) for which the cost complexity differs
significantly from being logarithmic to being exponential. We will
also derive more precise asymptotic approximations.

The analysis of log-games will be more subtle due to the inherent
periodic fluctuations in the asymptotic approximations to the moments
and the limit law, which will be clarified by generalizing our recent
approach from \cite{FuHwMaIt}. The analysis of exp-games, on the
other hand, is more straightforward and our results will follow by the
method of moments.

\subsection{Generalized Janken games}

Let $m\ge 2$, and we are given $m$ hands $\{\mathscr{H}_1,\ldots,
\mathscr{H}_m\}$ with (positive) probabilities $\{p_1,\ldots,p_m\}$.
A generalized Janken game is played as follows. Assume that the set
of hands the $n$ players may choose equals $S=\{\mathscr{H}_{i_1},
\ldots, \mathscr{H}_{i_\ell}\}$. Then there are two situations:

\begin{itemize}

\item $S$ is a (clear-cut) \emph{win-or-defeat (abbreviated as WOD)
set}, i.e., $S=W\cup D$ with $W\cap D=\emptyset$, where
$W=\{\mathscr{H}_{j_1}, \ldots, \mathscr{H}_{j_r}\}, 1\leq r<\ell$ is a set
of winning hands, meaning that players having chosen these hands
continue to play in the next round, and $D=\{\mathscr{H}_{k_1},
\ldots,\mathscr{H}_{k_{\ell-r}}\}$ is a set of losing hands, meaning
that players having chosen these hands are eliminated after the
current round.

\item The hands in $S$ result in a \emph{tie}, meaning that no one is
eliminated and the round is non-conclusive and has to be repeated.

\end{itemize} The generalized Janken game is played one round after
another until a single player remains who is then the leader. This is
always possible if there is at least one WOD set whose cardinality is
two. We refer to these games as \GJLS.

For analysis purposes, we introduce more notations. First, given
$S=\{\mathscr{H}_{i_1},\ldots,\mathscr{H}_{i_\ell}\}$, define
\[
    \pi_n^{(S)}:=\sum_{\substack{j_1+\cdots+j_\ell=n\\
	j_1,\ldots,j_\ell\ge 1}}
	\binom{n}{j_1,\ldots,j_\ell}p_{i_1}^{j_1}
	\cdots p_{i_\ell}^{j_\ell}.
\]
Then, by the inclusion-exclusion principle,
\begin{align*}
    \pi_n^{(S)}
    &=\left(p_{i_1}+\cdots+p_{i_{\ell}}\right)^n\\
	&\quad -\left(p_{i_2}+\cdots+p_{i_\ell}\right)^n-\cdots-
    \left(p_{i_1}+\cdots+p_{i_{\ell-1}}\right)^n\\
    &\quad +\left(p_{i_3}+\cdots+p_{i_\ell}\right)^n
    +\left(p_{i_2}+p_{i_4}+\cdots+p_{i_\ell}\right)^n+\cdots+
    \left(p_{i_1}+\cdots+p_{i_{\ell-2}}\right)^n\\
	&\quad\pm\cdots.
\end{align*}
From this, we see that, for large $n$,
\begin{equation}\label{asym-prob}
    \pi_n^{(S)}
	\sim \left(p_{i_1}+\cdots+p_{i_{\ell}}\right)^n.
\end{equation}
Let $\WD$ denote the set of all WOD sets. Define now two game indices:
\begin{equation}\label{def-rho}
    \rho
	:=\max_{\{\mathscr{H}_{i_1},\ldots,
	\mathscr{H}_{i_\ell}\}\in\WD}
	\{p_{i_1}+\cdots+p_{i_\ell}\},
\end{equation}
and $\nu$ the number of WOD sets attaining the maximum value $\rho$.
We distinguish between two cases:
\begin{itemize}
\item log-game: $\rho=1$;
\item exp-game: $\rho<1$.
\end{itemize}
Note that a log-game occurs if and only if $\{\mathscr{H}_1,\ldots,
\mathscr{H}_m\}$ is itself a WOD set, and in this case
$\nu=1$. Also log-games are more meaningful when the hands are
generated by purely random mechanisms but not by intentional
calculation.

We are interested in the number of rounds $X_n$ used by $n$ players
to select a leader by \GJLS, which is one of the most important cost
measures of the game. This number satisfies, by considering the
size of the winning group after the first round of \GJLS, the
following distributional recurrence
\begin{equation}\label{dis-rec-1}
    X_n\stackrel{d}{=}X_{I_n}+1\qquad (n\ge 2),
\end{equation}
where the $X_n$'s and $I_n$'s are independent, $X_1=0$, and
for $1\le j\le n$,
\begin{align}\label{In}
	\mathbb{P}(I_n=j)
	=\begin{cases} 1-\varpi_n,&\text{if}\ j=n;\\
	\displaystyle
	\sum_{\substack{S\in\WD\\ S=W\cup D}}
	\binom{n}{j}\pi_j^{(W)}\pi_{n-j}^{(D)},
	&\text{if}\ 1\le j<n,
	\end{cases}
\end{align}
where
\[
	\varpi_n
	:=\sum_{S\in\WD}\pi_n^{(S)}
\]
stands for the probability of no tie occurring. From
(\ref{asym-prob}) and by the definitions of $\rho$ and $\nu$, we have
\begin{align}\label{prob-no-tie}
    \varpi_n\sim \nu\rho^n.
\end{align}

Alternatively, instead of considering only one round, one may also
wait for a random number of times $T_n$ until a WOD set is reached.
This then yields the following alternative distributional recurrence
for $X_n$
\begin{equation}\label{dis-rec-2}
    X_n\stackrel{d}{=}X_{J_n}+T_n\qquad (n\ge 2),
\end{equation}
where the $X_n$'s, $J_n$'s, and $T_n$'s are independent, $T_n$ is a
geometric distributed random variable with parameter $\varpi_n$,
$X_1=0$ and for $1\le j<n$,
\begin{align}\label{Jn}
	\mathbb{P}(J_n=j)=\frac{1}{\varpi_n}
	\sum_{\substack{S\in\WD\\ S=W\cup D}}
	\binom{n}{j}\pi_j^{(W)}\pi_{n-j}^{(D)}.
\end{align}

Both forms of the distributional recurrence will be useful for us;
(\ref{dis-rec-1}) will be used in the analysis of log-games, whereas
(\ref{dis-rec-2}) is advantageous in the analysis of exp-games.

\subsection{Log-games}\label{log-case}

In this subsection, we consider log-games for which $\rho=1$ and
$\nu=1$. In this case the whole set of hands $\{\mathscr{H}_1,
\ldots, \mathscr{H}_m\}$ is itself a WOD set, and we define
\[
    \alpha:=\sum_{\mathscr{H}_j\text{ is a winning hand in }
    \{\mathscr{H}_1,\ldots,\mathscr{H}_m\}}p_j.
\]

Denote by $X_n$ the number of rounds to select a leader by \GJLS.
Such games are marked by its complexity $X_n$ satisfying logarithmic
mean, bounded variance and periodic oscillations of the asymptotic
distributions. This is the same pattern as for \CTLS; see Section \ref{intro}.
For the proofs, we extend the analytic approach used in our previous paper \cite{FuHwMaIt}.

\paragraph{Mean value.}
Let $\mu_n := \mathbb{E}(X_n)$. Then, by (\ref{dis-rec-1}),
we obtain that
\[
	\mu_n
	=\sum_{1\le j<n}\sum_{\substack{S\in\WD\\ S=W\cup D}}
	\binom{n}{j}\pi_j^{(W)}\pi_{n-j}^{(D)}
	\mu_j+(1-\varpi_n)\mu_n+1,
\]
for $n\ge 2$, with the initial condition $\mu_1=0$.

To solve this
recurrence, we consider first the Poisson generating function
\[
    \tilde{f}_1(z)
    :=e^{-z}\sum_{n\ge 1}\mu_n\frac{z^n}{n!}.
\]

In what follows, an ``exponentially small term'' is used to mean an
entire function that is bounded above by $e^{-c\Re(z)}$ for some
$c>0$ as $|z|\to\infty$ in the half-plane $\Re(z)>0$. \begin{lmm} The
Poisson generating function of $\mu_n$ satisfies
the functional equation
\begin{align}\label{f1z-PGF}
    \tilde{f}_1(z)
    =\tilde{f}_1(\alpha z)+1+\sum_{1\le j\le k}
    \lambda_je^{-\beta_jz}\tilde{f}_1(\alpha_jz)-(1+z)e^{-z},
\end{align}
with $\tilde{f}_1(0)=0$, where $\lambda_j\in\{-1,1\}$ and
$0<\alpha_j,\beta_j<1$ are constants.
\end{lmm}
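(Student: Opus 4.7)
The plan is to convert the mean recurrence into a functional equation for the exponential generating function $f_1(z):=\sum_{n\ge 1}\mu_n z^n/n!$ and then pass to the Poisson generating function via $f_1(z)=e^z\tilde{f}_1(z)$. First, rearrange the recurrence to isolate $\varpi_n\mu_n$ on the left, multiply by $z^n/n!$, and sum over $n\ge 2$; the double convolution on the right factorizes WOD-set by WOD-set as $\sum_{S\in\WD,\,S=W\cup D}h^{(W)}(z)\,g^{(D)}(z)$ with
\[
h^{(W)}(z):=\sum_{j\ge 1}\mu_j\pi_j^{(W)}\frac{z^j}{j!},\qquad g^{(D)}(z):=\prod_{i\in D}(e^{p_i z}-1),
\]
the ``$+1$'' term contributes $e^z-1-z$, and the left side becomes $\sum_S\sum_n\pi_n^{(S)}\mu_n z^n/n!$.

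Next, I would apply a double inclusion-exclusion. For any nonempty subset $A$ of the hands, write $s_A:=\sum_{i\in A}p_i$ and use $\pi_n^{(A)}=\sum_{\emptyset\ne T\subseteq A}(-1)^{|A\setminus T|}s_T^n$ to rewrite both sides in terms of $f_1$ at shrunken arguments:
\[
\sum_n\pi_n^{(A)}\mu_n\frac{z^n}{n!}=\sum_{\emptyset\ne T\subseteq A}(-1)^{|A\setminus T|}f_1(s_T z),\qquad g^{(D)}(z)=\sum_{U\subseteq D}(-1)^{|D\setminus U|}e^{s_U z}.
\]
Substituting $f_1(u)=e^u\tilde{f}_1(u)$ and multiplying the whole equation through by $e^{-z}$, every single term on either side assumes the shape $(\pm 1)\,e^{-\beta z}\tilde{f}_1(\alpha z)$, with $\alpha=s_T$ and $\beta=1-s_T$ (on the left) or $\beta=1-s_T-s_U$ (on the right), and signs $(-1)^{|S\setminus T|}$ or $(-1)^{|W\setminus T|+|D\setminus U|}$.

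Finally, I would extract the two ``dominant'' contributions, namely those with $\beta=0$. On the left, only the pair $(S,T)=(S_0,S_0)$ with $S_0:=\{\mathscr{H}_1,\ldots,\mathscr{H}_m\}$ produces $s_T=1$ and hence $\beta=0$; this uses the hypothesis $\nu=1$, which ensures $S_0$ is the \emph{unique} WOD set with $s_S=\rho=1$, and yields the bare $\tilde{f}_1(z)$. On the right, the unique triple with $s_T+s_U=1$ is $(S,T,U)=(S_0,W_0,D_0)$, yielding $\tilde{f}_1(\alpha z)$ since $s_{W_0}=\alpha$. Every remaining index choice has $T\ne\emptyset$ (so $s_T>0$) and either $T\subsetneq S_0$ or $(T,U)\ne(W_0,D_0)$ (so $1-s_T>0$ or $1-s_T-s_U>0$); in every such case $\alpha_j=s_T$ and the corresponding $\beta_j$ both lie strictly in $(0,1)$. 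Rearranging then gives the claimed functional equation, while $\tilde{f}_1(0)=0$ follows from $\mu_0=\mu_1=0$. The principal obstacle is the bookkeeping of the two nested inclusion-exclusions, which generate many terms; the delicate point is verifying that exactly one contribution on each side escapes the strict-bound region, and that all remaining terms are genuine $e^{-\beta_j z}\tilde{f}_1(\alpha_j z)$ with $0<\alpha_j,\beta_j<1$. The hypothesis $\nu=1$ is exactly what guarantees uniqueness of the main contribution on the left.
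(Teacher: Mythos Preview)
Your proposal is correct and follows essentially the same approach as the paper: expand $\pi_n^{(A)}$ by inclusion--exclusion so that every contribution becomes $(\pm 1)e^{-\beta z}\tilde f_1(\alpha z)$, then isolate the unique term on each side with $\beta=0$ (coming from the full hand set $S_0$, which is in $\WD$ precisely because $\rho=1$). The only organizational difference is that you move $\varpi_n\mu_n$ to the left before summing, whereas the paper keeps $(1-\varpi_n)\mu_n$ on the right and handles it as a block of exponentially small terms; your version makes the double inclusion--exclusion bookkeeping more explicit, but the mechanism is identical.
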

\begin{proof}
Since $\rho=1$, we see that $1-\varpi_n$ consists only of
exponentially small terms. This implies that we can arrange the terms
and write
\[
    e^{-z}\sum_{n\ge 1}(1-\varpi_n)
    \mathbb{E}(X_n)\frac{z^n}{n!}
    =\sum_{1\le j\le k}\lambda_j
    e^{-\beta_jz}\tilde{f}_1(\alpha_jz).
\]
On the other hand
\begin{align*}
    e^{-z}\sum_{n\ge 2}\sum_{1\le j<n}
    &\sum_{\substack{S\in\WD\\ S=W\cup D}}
	\binom{n}{j}\pi_j^{(W)}\pi_{n-j}^{(D)}
	\mathbb{E}(X_j)\frac{z^n}{n!}\\
    &=e^{-z}\sum_{\substack{S\in\WD\\ S=W\cup D}}
	\left(\sum_{n\ge 1}\pi_{n}^{(D)}\frac{z^n}{n!}\right)
	\left(\sum_{n\ge 1}\pi_n^{(W)}
	\mathbb{E}(X_n)\frac{z^n}{n!}\right).
\end{align*}
The largest terms (as $|z|\to\infty$ in $\Re(z)>0$) comes from the
whole set $S=\{\mathscr{H}_1,\ldots,\mathscr{H}_m\}$, which is itself
a WOD set and produces terms of the form (by
\eqref{asym-prob})
\[
	e^{-z}\left(\sum_{n\ge 1}
	\pi_{n}^{(D)}\frac{z^n}{n!}\right)
	\left(\sum_{n\ge 1}\pi_n^{(W)}
	\mathbb{E}(X_n)\frac{z^n}{n!}\right)
	=\tilde{f}_1(\alpha z)
	+\sum_{1\le j\le k}
	\lambda_je^{-\beta_jz}\tilde{f}_1(\alpha_jz),
\]
for some $\lambda_j\in\{-1,1\}$ and $0<\alpha_j,\beta_j<1$ (whose
values may differ from one occurrence to another). For all other
WOD sets different from $\{\mathscr{H}_1,\ldots,\mathscr{H}_m\}$,
we have $\sum_{\mathscr{H}_i\in S}p_i<1$ and they can be regrouped as
\[
	e^{-z}\sum_{\substack{S\in\WD\\ S=W\cup D\\
	S\ne\{\mathscr{H}_1,\ldots,\mathscr{H}_m\}}}
	\left(\sum_{n\ge 1}\pi_{n}^{(D)}\frac{z^n}{n!}\right)
	\left(\sum_{n\ge 1}\pi_n^{(W)}
	\mathbb{E}(X_n)\frac{z^n}{n!}\right)
	=\sum_{1\le j\le k}\lambda_j
	e^{-\beta_jz}\tilde{f}_1(\alpha_jz),
\]
where $\lambda_j\in\{-1,1\}$ and $0<\alpha_j,\beta_j<1$. The
remaining computations are straightforward. Thus, our claim is
proved.
\end{proof}

We will write \eqref{f1z-PGF} as
\begin{equation}\label{func-eq-mean}
    \tilde{f}_1(z)=\tilde{f}_1(\alpha z)+1+\tilde{\phi}_1(z),
\end{equation}
where $\tilde{\phi}_1(z)$ is exponentially small in the half-plane
$\Re(z)>0$.

\paragraph{Variance.} Consider now the Poisson generating function of
$\mathbb{E}(X_n^2)$, denoted by
\[
    \tilde{f}_2(z)
    :=e^{-z}\sum_{n\ge 1}\mathbb{E}(X_n^2)\frac{z^n}{n!}.
\]
Define
\[
    \tilde{V}(z):=\tilde{f}_2(z)-\tilde{f}_1(z)^2.
\]
\begin{lmm} The function $\tilde{V}(z)$ satisfies the functional
equation
\begin{equation}\label{func-eq-var}
    \tilde{V}(z)=\tilde{V}(\alpha z)+\tilde{\phi}_3(z),
\end{equation}
where $\tilde{\phi}_3(z)$ is an exponentially small term.
\end{lmm}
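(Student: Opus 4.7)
The plan is to derive a functional equation for $\tilde{f}_2(z)$ parallel to the one already established for $\tilde{f}_1(z)$, and then form the difference $\tilde{V}(z)=\tilde{f}_2(z)-\tilde{f}_1(z)^2$, using \eqref{func-eq-mean} to kill the first-moment contributions.

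First I would derive a recurrence for $\mu_n^{(2)}:=\mathbb{E}(X_n^2)$. From the distributional recurrence \eqref{dis-rec-1}, squaring and taking expectations yields
\[
\mu_n^{(2)}=\mathbb{E}(X_{I_n}^2)+2\mathbb{E}(X_{I_n})+1.
\]
The mean recurrence itself gives $\mathbb{E}(X_{I_n})=\mu_n-1$, so, separating the tie term $j=n$ on the left, one obtains
\[
\varpi_n\mu_n^{(2)}=\sum_{\substack{S\in\WD\\ S=W\cup D}}\sum_{1\le j<n}\binom{n}{j}\pi_j^{(W)}\pi_{n-j}^{(D)}\mu_j^{(2)}+2\mu_n-1.
\]
Next I would Poisson-transform this recurrence by repeating, almost verbatim, the manipulations from the proof of the previous lemma. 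Every step there depended only on the structure of the coefficients $\binom{n}{j}\pi_j^{(W)}\pi_{n-j}^{(D)}$ and on $\rho=1$, so the same splitting into a dominant contribution from the whole set $S=\{\mathscr{H}_1,\ldots,\mathscr{H}_m\}$ and subdominant contributions from the other WOD sets applies. The outcome is
\[
\tilde{f}_2(z)=\tilde{f}_2(\alpha z)+2\tilde{f}_1(z)-1+\tilde{\phi}_2(z),
\]
with $\tilde{\phi}_2(z)$ exponentially small in $\Re(z)>0$.

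To deduce the functional equation for $\tilde{V}$, I would square \eqref{func-eq-mean} to get
\[
\tilde{f}_1(z)^2=\tilde{f}_1(\alpha z)^2+2\tilde{f}_1(\alpha z)+1+2\tilde{f}_1(\alpha z)\tilde{\phi}_1(z)+2\tilde{\phi}_1(z)+\tilde{\phi}_1(z)^2,
\]
and also use the identity $2\tilde{f}_1(z)-2\tilde{f}_1(\alpha z)=2+2\tilde{\phi}_1(z)$. Substituting both into $\tilde{V}(z)=\tilde{f}_2(z)-\tilde{f}_1(z)^2$ causes the large pieces $\tilde{f}_1(\alpha z)$, $\tilde{f}_1(\alpha z)^2$ and the constants $\pm 1$ to cancel, leaving
\[
\tilde{V}(z)=\tilde{V}(\alpha z)+\tilde{\phi}_2(z)-2\tilde{f}_1(\alpha z)\tilde{\phi}_1(z)-\tilde{\phi}_1(z)^2.
\]

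The main obstacle will be verifying that the three-term remainder is exponentially small. The pieces $\tilde{\phi}_2(z)$ and $\tilde{\phi}_1(z)^2$ are clearly so. For the cross term $\tilde{f}_1(\alpha z)\tilde{\phi}_1(z)$ one needs a crude polynomial (in fact logarithmic) bound on $\tilde{f}_1(z)$ in $\Re(z)>0$: iterating \eqref{func-eq-mean} from large $|z|$ down into a bounded region shows $\tilde{f}_1(z)=O(\log|z|)$ there, consistent with the order suggested by \eqref{EXn-logn}, and such a logarithmic factor is absorbed without difficulty by the exponential decay of $\tilde{\phi}_1(z)$. Setting $\tilde{\phi}_3(z):=\tilde{\phi}_2(z)-2\tilde{f}_1(\alpha z)\tilde{\phi}_1(z)-\tilde{\phi}_1(z)^2$ then yields \eqref{func-eq-var}.
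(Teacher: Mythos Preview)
Your argument is correct and follows the same route as the paper: derive a functional equation for $\tilde{f}_2$ by repeating the Poissonization from Lemma~1, then subtract the square of \eqref{func-eq-mean} to obtain \eqref{func-eq-var}. The only cosmetic difference is that you first use $\mathbb{E}(X_{I_n})=\mu_n-1$ and hence write the non-homogeneous part as $2\tilde{f}_1(z)-1$, whereas the paper Poisson-transforms $2\mathbb{E}(X_{I_n})+1$ directly to get $2\tilde{f}_1(\alpha z)+1$; by \eqref{func-eq-mean} these differ by $2\tilde{\phi}_1(z)$, so the two versions of $\tilde{\phi}_2$ agree up to an exponentially small term and the subtraction step is identical.
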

\begin{proof}
A similar analysis as that used for $\tilde{f}_1(z)$ leads to the
functional equation
\begin{equation}\label{func-eq-sm}
    \tilde{f}_2(z)=\tilde{f}_2(\alpha z)
	+2\tilde{f}_1(\alpha z)+1+\tilde{\phi}_2(z),
\end{equation}
where $\tilde{\phi}_2(z)$ consists of exponentially small terms
(involving both $\tilde{f}_1(z)$ and $\tilde{f}_2(z)$). Then
\eqref{func-eq-var} follows from (\ref{func-eq-mean}) and
(\ref{func-eq-sm}).
\end{proof}

\paragraph{Asymptotics and JS-admissibility.} From the Poisson
generating functions $\tilde{f}_1(z)$ (which is indeed the expected
cost $X_n$ when $n$ itself follows a Poisson($z$) distribution), we
can recover the asymptotic behaviors of $\mathbb{E}(X_n)$ by the
relation
\[
    \mu_n = n![z^n]e^z \tilde{f}_1(z).
\]
This can be done by several means, and a by-now standard approach are
the so-called analytic de-Poissonization techniques largely developed
by Jacquet and Szpankowski in \cite{JacSz}, which rely on the
saddle-point method (see \cite{FS09}). The use of such an approach
can be further schematized by introducing the notion of
$\JS$-admissible functions, which we formulated in \cite{HwFuZa}. We
briefly sketch the underlying ideas, and refer the interested readers
to our previous papers for more details \cite{FuHwMaIt,HwFuZa}.
Similarly, since $\mu_n$ is of logarithmic order and the variance is
bounded, the function $\tilde{V}(z)$ will provide a sufficiently good
approximation to the variance $\mathbb{V}(X_n)$.

We recall the following definition from \cite{HwFuZa}.
Let
\[
    \mathscr{C}_{\varepsilon} :=
	\{z\,:\, |\arg(z)| \le \varepsilon\},
\]
where $\varepsilon>0$ is an arbitrary constant (which will
subsequently be used as a generic symbol whose value may change
from one occurrence to another).
\begin{Def}
Let $\tilde{f}(z)$ be an entire function and $\xi,
\eta\in\mathbb{R}$. Then $\tilde{f}(z)$ is \emph{JS-admissible},
written $\tilde{f}\in\JS$ (or more precisely, $\tilde{f} \in
\JS_{\!\!\xi,\eta})$, if for $0<\phi<\pi/2$ and all $\vert z\vert\ge
1$ the following two conditions hold.
\begin{itemize}
\item[\textbf{(I)}] Uniformly for $z\in\mathscr{C}_{\varepsilon}$,
\[
	\tilde{f}(z)
	=O\left(\vert z\vert^{\xi}
	(\log_{+}\vert z\vert)^{\eta}\right),
\]
where $\log_{+}x:=\log(1+x)$.
\item[\textbf{(O)}] Uniformly for $\phi\le\vert\arg(z)\vert\le\pi$,
\[
	f(z):=e^{z}\tilde{f}(z)
	=O\left(e^{(1-\varepsilon)\vert z\vert}\right),
\]
for some $\varepsilon>0$.
\end{itemize}
\end{Def}
Such functions enjoy closure properties under several different
operations, and it is these properties that make such a notion really
useful. Also if $f\in\JS_{\!\!\xi,\eta}$, then
\[
    n![z^n]f(z) = \sum_{0\le j<2k}
	\frac{\tilde{f}^{(j)}(n)}{j!}\,\tau_j(n)
	+O\left(n^{\xi-k} (\log n)^\eta
	\right)\qquad(k\ge1),
\]
where
\[
    \tau_j(n) := \sum_{0\le \ell \le j}
	\binom{j}{\ell}(-1)^{j-\ell}
	\frac{n!n^{j-\ell}}{(n-\ell)!}
	\qquad(j=0,1,\dots)
\]
are Charlier polynomials. In particular,
\begin{align}\label{poi-heu}
    n![z^n]f(z) = \tilde{f}(n) - \frac{\tilde{f}''(n)}2\,n
	+ \frac{\tilde{f}'''(n)}3\, n +O\left(
	n^{\xi-2}(\log n)^\eta\right).
\end{align}
Specific to our analysis, we need additionally the following
property.

\begin{lmm}\label{transfer-JS}
Let $\tilde{f}$ and $\tilde{g}$ be two entire functions satisfying
the functional equation
\begin{equation}\label{gen-func-eq}
	\tilde{f}(z)=\tilde{f}(\alpha z)
	+\sum_{1\le j\le k}\lambda_je^{-\beta_jz}
	\tilde{f}(\alpha_jz)+\tilde{g}(z),
\end{equation}
with $\tilde{f}(0)=\tilde{g}(0)=0$, where $\lambda_j\in\mathbb{R}$,
$\beta_j>0$ and $\alpha,\alpha_j\in(0,1)$. Then
\[
	\tilde{g}\in\JS
	\quad\Longleftrightarrow
	\quad\tilde{f}\in\JS.
\]
\end{lmm}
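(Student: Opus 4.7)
The plan is to treat the two implications separately. For the easy direction $\tilde f\in\JS\Rightarrow\tilde g\in\JS$, I would simply rearrange \eqref{gen-func-eq} as
\[
    \tilde g(z)=\tilde f(z)-\tilde f(\alpha z)-\sum_{1\le j\le k}\lambda_j e^{-\beta_j z}\tilde f(\alpha_j z),
\]
and invoke the closure properties of $\JS$ already established in \cite{HwFuZa}: membership in $\JS$ is preserved under the rescaling $\tilde f(z)\mapsto\tilde f(cz)$ with $c\in(0,1)$, under multiplication by $e^{-\beta z}$ with $\beta>0$, and under finite real linear combinations. Each summand on the right then lies in $\JS$, hence so does $\tilde g$.

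The reverse direction $\tilde g\in\JS\Rightarrow\tilde f\in\JS$ is the substantive part. The first step is to iterate \eqref{gen-func-eq}: because $\tilde f(0)=0$ and $\tilde f$ is entire, $\tilde f(\alpha^N z)\to 0$ locally uniformly as $N\to\infty$, yielding the fixed-point representation
\[
    \tilde f(z)=\sum_{n\ge 0}\left(\tilde g(\alpha^n z)+\sum_{1\le j\le k}\lambda_j e^{-\beta_j\alpha^n z}\tilde f(\alpha_j\alpha^n z)\right).
\]
To verify condition \textbf{(I)} on the cone $\mathscr{C}_{\varepsilon}$ I would proceed by induction on $|z|$. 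For $|z|$ bounded, entirety together with $\tilde f(0)=0$ supplies the required linear bound. For $|z|$ large, I split the outer sum at $N_z:=\lceil\log|z|/\log(1/\alpha)\rceil$. The indices $n\le N_z$ satisfy $\alpha^n|z|\ge 1$, so the $\JS$-bound on $\tilde g$ applies and the inductive hypothesis controls $\tilde f(\alpha_j\alpha^n z)$ at a strictly smaller modulus; crucially, $\Re(\alpha^n z)\ge\alpha^n|z|\cos\varepsilon>0$, so the factor $e^{-\beta_j\alpha^n z}$ injects super-polynomial decay into the self-referential term. The indices $n>N_z$ live in a fixed neighbourhood of the origin, where the linear bounds on $\tilde f$ and $\tilde g$ yield a geometrically convergent tail of order $O(\alpha^n|z|)$. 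Combining the two ranges and absorbing the resulting series into (possibly enlarged) parameters $(\xi,\eta)$ closes the induction and gives condition \textbf{(I)} for $\tilde f$.

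Condition \textbf{(O)} on the sector $\phi\le|\arg z|\le\pi$ follows from the same iterated identity, now using the subexponential bound $e^z\tilde g(z)=O(e^{(1-\varepsilon)|z|})$ and the analogous inductive bounds for $e^{z}\tilde f(\alpha_j\alpha^n z)$, together with the elementary fact that $e^{(1-c)z}$ only shrinks the exponent in the sector when $c\in(0,1)$. The principal obstacle throughout is the self-referential occurrence of $\tilde f$ on the right-hand side of the iterated equation; I plan to overcome it precisely by exploiting the exponential smallness of $e^{-\beta_j\alpha^n z}$ inside $\mathscr{C}_{\varepsilon}$ (and its controlled growth outside), which keeps the self-referential contributions strictly subordinate to those coming from $\tilde g$, so that $\tilde f$ is forced to inherit $\JS$-membership from $\tilde g$.
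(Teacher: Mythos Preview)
The paper does not actually prove this lemma; it states only that the argument is similar to that of Proposition~3 in the Appendix of~\cite{FuHwMaIt} and omits all details. Your plan---closure properties for $\tilde f\in\JS\Rightarrow\tilde g\in\JS$, and for the converse iterate~\eqref{gen-func-eq} into the series representation and run an induction on $|z|$, using the exponential damping $e^{-\beta_j\alpha^n z}$ to keep the self-referential terms subordinate---is exactly the scheme used in that reference, so your proposal is aligned with the intended proof.

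One cautionary remark: your discussion of condition~\textbf{(O)} is too breezy. In the range $\pi/2<|\arg z|\le\pi$ one has $\Re(z)<0$, so $|e^{-\beta_j\alpha^n z}|>1$ and the self-referential terms are \emph{amplified} rather than damped; the phrase ``$e^{(1-c)z}$ only shrinks the exponent in the sector'' does not settle this. In the paper's applications the pairs $(\alpha_j,\beta_j)$ arise from Poissonization of probability splittings and always satisfy $\alpha_j+\beta_j\le1$, which is precisely what keeps the combined exponents below $1$ when one rewrites everything in terms of $f(z)=e^z\tilde f(z)$. The argument in~\cite{FuHwMaIt} exploits that compatibility; you should invoke it explicitly rather than rely on a generic contraction, or your induction for~\textbf{(O)} will not close in the stated generality ($\beta_j>0$ arbitrary).
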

The proof is similar to that of Proposition 3 in the Appendix of
\cite{FuHwMaIt}, and is omitted here.

From this and (\ref{func-eq-mean}), (\ref{func-eq-sm}), we see that
$\tilde{f}_1\in\JS_{\!\!0,1}$ and $\tilde{f}_2\in\JS_{\!\!0,2}$. By
\eqref{poi-heu} and the arguments used for the variance in
\cite{HwFuZa}, we obtain that
\begin{equation}\label{dep-mean-var}
	\mathbb{E}(X_n)=\tilde{f}_1(n)+O\left(n^{-1}\right),\qquad
	\mathbb{V}(X_n)=\tilde{V}(n)+O\left(n^{-1}\right).
\end{equation}
Thus for asymptotic purposes, we can entirely focus on the Poisson
model. A standard approach for the asymptotics of the Poisson
generating function for large $|z|$ in similar situations is based
on the Mellin transform techniques
\[
	F^{*}(s)=\mathscr{M}[\tilde{f};s]
	:=\int_{0}^{\infty}\tilde{f}(z)z^{s-1}{\rm d}z;
\]
see \cite{FlGoDu} for an authoritative survey. We first derive the
asymptotic behavior for functions satisfying the more general
equation (\ref{gen-func-eq}). Let
\[
	\tilde{\phi}(z)
	:=\sum_{1\le j\le k}\lambda_j
	e^{-\beta_jz}\tilde{f}(\alpha_jz).
\]
\begin{pro}\label{asym-gen-func-eq}
Let $L:=\log(1/\alpha)$ and $\chi_k:=2k\pi i/L$.
\begin{itemize}
	
\item[(i)] Assume that $\tilde{g}\in\JS_{\xi,\eta}$ with
$\xi<0$. Then, as $|z|\to\infty$ in the sector
$\mathscr{C}_{\varepsilon}$,
\begin{align}\label{fz-i}
	\tilde{f}(z)= \frac{1}{L}\sum_{k\in\mathbb{Z}}
	\left(\Phi^{*}(\chi_k)+G^{*}(\chi_k)\right)z^{-\chi_k}
	+O\left(|z|^{-\min\{1,\xi\}}
	(\log|z|)^\eta\right),
\end{align}
where $\Phi^{*}(s):=\mathscr{M}[\tilde{\phi};s]$ and $G^{*}(s)
:=\mathscr{M}[\tilde{g};s]$.

\item[(ii)] Assume that $\tilde{g}\in\JS$ and
\[
	\tilde{g}(z)
	=c+O\left(\vert z\vert^{-\xi}\right),
\]
as $\vert z\vert\to\infty$ in the sector $\mathscr{C}_{\varepsilon}$.
Then
\begin{align*}
	\tilde{f}(z)
	&= c\log_{1/\alpha} z+\frac{c}{2}+\frac{d+\Phi^{*}(0)}{L}
	+\sum_{k\ne 0}\left(\Phi^{*}(\chi_k)
	+G^{*}(\chi_k)\right)z^{-\chi_k}
	+O\left(|z|^{-\min\{1,\xi\}}\right),
\end{align*}
as $|z|\to\infty$ in $\mathscr{C}_{\varepsilon}$, where
$d=\lim_{s\to 0}(G^{*}(s)+c/s)$.
\end{itemize}
\end{pro}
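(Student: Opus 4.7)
The proof is a classical Mellin--Barnes analysis applied to \eqref{gen-func-eq}. First, taking $\mathscr{M}[\cdot;s]$ termwise in \eqref{gen-func-eq} and using the rescaling rule $\mathscr{M}[\tilde{f}(\alpha z);s]=\alpha^{-s}F^{*}(s)$ collapses the equation to
\[
    F^{*}(s)=\frac{\Phi^{*}(s)+G^{*}(s)}{1-\alpha^{-s}},
\]
valid in a common fundamental strip whose right edge abuts the imaginary axis from below. Because each summand of $\tilde{\phi}$ carries a factor $e^{-\beta_j z}$, $\Phi^{*}$ extends holomorphically far into the right half-plane and contributes no poles of its own.

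Second, I would classify the singularities of $F^{*}$. The zeros of $1-\alpha^{-s}$ are simple and lie exactly at $s=\chi_k$ (since $\alpha^{-\chi_k}=e^{\chi_k L}=1$), with local expansion $1-\alpha^{-s}=-L(s-\chi_k)-\tfrac12 L^{2}(s-\chi_k)^{2}+O((s-\chi_k)^{3})$. In case (i), the hypothesis $\xi<0$ pushes the strip of convergence of $G^{*}$ past the imaginary axis, so each $\chi_k$ is just a simple pole of $F^{*}$. In case (ii), the splitting
\[
    G^{*}(s)=\int_{0}^{1}\tilde{g}(z)z^{s-1}\,dz+\int_{1}^{\infty}\bigl(\tilde{g}(z)-c\bigr)z^{s-1}\,dz-\frac{c}{s}
\]
shows that $G^{*}$ itself has a simple pole at $s=0$ with residue $-c$ and finite part $d$, so $F^{*}$ acquires a \emph{double} pole at $0$, while the other $\chi_k$ with $k\ne0$ stay simple.

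Third, I would invert via $\tilde{f}(z)=\frac{1}{2\pi i}\int_{(c_{0})}F^{*}(s)z^{-s}ds$ with $c_{0}$ negative and in the fundamental strip, and push the contour rightward to a line $\Re(s)=c_{1}>0$ lying short of the next singularity to the right. The residue theorem then gives
\[
    \tilde{f}(z)=-\sum_{k}\mathrm{Res}_{s=\chi_k}\bigl[F^{*}(s)z^{-s}\bigr]+\frac{1}{2\pi i}\int_{(c_{1})}F^{*}(s)z^{-s}\,ds,
\]
with the minus sign forced by the orientation of the enclosed rectangle (whose left edge at $\Re(s)=c_0$ is traversed downward). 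In case (i), each residue evaluates immediately to $-(\Phi^{*}(\chi_k)+G^{*}(\chi_k))/L\cdot z^{-\chi_k}$, producing the announced Fourier series. In case (ii), the double pole at $0$ is handled by multiplying the three Laurent expansions
\[
    \tfrac{1}{1-\alpha^{-s}}=-\tfrac{1}{Ls}+\tfrac12+O(s),\quad(G^{*}+\Phi^{*})(s)=-\tfrac{c}{s}+d+\Phi^{*}(0)+O(s),\quad z^{-s}=1-s\log z+O(s^{2}),
\]
and reading off the $s^{-1}$-coefficient; the resulting $-\mathrm{Res}_{s=0}$ comes out to exactly $c\log_{1/\alpha}z+c/2+(d+\Phi^{*}(0))/L$, while the other $\chi_k\ne 0$ contribute the oscillatory series as in case (i).

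The last step is to bound the shifted integral. By Lemma~\ref{transfer-JS}, $\tilde{f}\in\JS$, and its defining conditions (I)--(O) combine, via the standard Mellin vertical-growth estimates (see \cite{FlGoDu}), to yield polynomial decay of $F^{*}(c_{1}+it)$ as $|t|\to\infty$; the integral on $\Re(s)=c_{1}$ is therefore $O(|z|^{-c_{1}}(\log|z|)^{\eta})$, and optimizing $c_{1}$ just short of the next singular abscissa produces the announced error term. I expect the main technical obstacles to be (a) establishing the vertical decay of $F^{*}$ in the presence of the self-referential term $\Phi^{*}$, which requires leveraging the off-cone exponential smallness of condition (O) to bound the feedback contribution of $\tilde{\phi}$ uniformly on vertical lines, and (b) the delicate Laurent-expansion bookkeeping in case (ii), where the quadratic coefficient $-\tfrac12 L^{2}$ in the expansion of $1-\alpha^{-s}$ must interact precisely with the $-c/s$ singularity of $G^{*}$ to produce the non-obvious constant $c/2$ in the final formula.
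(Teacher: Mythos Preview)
Your approach is essentially the paper's: take Mellin transforms in \eqref{gen-func-eq} to obtain $F^{*}(s)=(\Phi^{*}(s)+G^{*}(s))/(1-\alpha^{-s})$, locate the poles on $\Re(s)=0$ (simple in case (i), double at $s=0$ in case (ii) via the Direct Mapping Theorem), shift the inverse-Mellin contour rightward, and bound the remainder. Your residue bookkeeping and sign conventions are correct.

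One point to sharpen: for the expansion to hold uniformly for \emph{complex} $z\in\mathscr{C}_{\varepsilon}$, polynomial vertical decay of $F^{*}(c_{1}+it)$ is not enough, since $|z^{-s}|=|z|^{-c_{1}}e^{\arg(z)\,t}$ grows exponentially in $|t|$ when $\arg(z)\ne 0$. You need $F^{*}$ (equivalently, the numerator $\Phi^{*}+G^{*}$) to decay \emph{exponentially} along vertical lines. The paper secures this by invoking Lemma~\ref{transfer-JS} together with the Exponential Smallness Lemma of \cite{FuHwZa}; your plan already flags condition (O) as the key input, so this is a matter of stating the correct decay rate rather than a structural gap.
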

\begin{Rem}\label{exp-four}
Note that Fourier coefficients of the periodic functions in the above
result are less explicit due to the occurrence of $\tilde{f}(z)$ in
$\tilde{\phi}(z)$. However, in some situations (e.g., \CTLS) they can
be made explicit; see \cite{Pr} and the remarks in
Section~\ref{examples}.
\end{Rem}

\vspace*{0.2cm}\pf Consider part \emph{(i)}. The assumptions imply
that $G^{*}(s)$ exists in the strip $-1<\Re(s)<-\xi$ and
\begin{equation}\label{meltr-F}
	F^{*}(s):=\mathscr{M}[\tilde{f};z]
	=\frac{\Phi^{*}(s)+G^{*}(s)}{1-\alpha^{-s}},
	\qquad \Re(s)\in(-1,0).
\end{equation}
Note that from the assumptions, Lemma~\ref{transfer-JS} and the
Exponential Smallness Lemma in \cite{FuHwZa}, we obtain that
$F^{*}(s)$ decays exponentially fast along vertical lines in
$-1<\Re(s)<-\xi$. Consequently, by standard Mellin argument,
we deduce \eqref{fz-i}.

For part \emph{(ii)}, by the Direct Mapping Theorem in \cite{FlGoDu},
we see that $G^{*}(s)$ has a simple pole at $s=0$ with the
singularity expansion $G^{*}(s)\asymp c/s+d+\cdots$. The rest of the
proof then follows as in part \emph{(i)}. \qed

Collecting all results, we then derive the asymptotics of the mean
and the variance.

\begin{thm}[Log-games]\label{main-thm1-log}
If $\rho=1$, then the number of rounds $X_n$ to find a leader used by
\GJLS\ satisfies
\[
	\mathbb{E}(X_n)=\log_{1/\alpha}n
	+P_1(\log_{1/\alpha}n)+ O\left(n^{-1}\right),
\]
and
\[
	\mathbb{V}(X_n)
	= P_2(\log_{1/\alpha}n)+ O\left(n^{-1}\right),
\]
where $P_1(t),P_2(t)$ are bounded, $1$-periodic functions.
\end{thm}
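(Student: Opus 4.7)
The strategy is to combine the de-Poissonization identities \eqref{dep-mean-var} with Proposition~\ref{asym-gen-func-eq} applied to the two functional equations \eqref{func-eq-mean} and \eqref{func-eq-var}. Since \eqref{dep-mean-var} already reduces $\mathbb{E}(X_n)$ and $\mathbb{V}(X_n)$ to $\tilde{f}_1(n)$ and $\tilde{V}(n)$ up to an $O(n^{-1})$ error, it suffices to derive asymptotic expansions of $\tilde{f}_1(z)$ and $\tilde{V}(z)$ for large $|z|$ in the sector $\mathscr{C}_\varepsilon$, and then specialize to $z=n$.

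For the mean, equation \eqref{f1z-PGF} is an instance of \eqref{gen-func-eq} with forcing $\tilde{g}(z)=1-(1+z)e^{-z}$, which is easily seen to be JS-admissible and to satisfy $\tilde{g}(z)=1+O(e^{-\varepsilon|z|})$ as $|z|\to\infty$ in $\mathscr{C}_\varepsilon$. This places us in the setting of Proposition~\ref{asym-gen-func-eq}(ii) with constant $c=1$, producing
\[
    \tilde{f}_1(z)=\log_{1/\alpha}z+\tfrac{1}{2}+\frac{d+\Phi^{*}(0)}{L}+\sum_{k\ne 0}\bigl(\Phi^{*}(\chi_k)+G^{*}(\chi_k)\bigr)z^{-\chi_k}+O\bigl(|z|^{-1}\bigr).
\]
Absorbing the constant $\tfrac12+(d+\Phi^{*}(0))/L$ together with the Fourier series into a single function $P_1(\log_{1/\alpha}z)$ and setting $z=n$ yields the claimed expansion of $\mathbb{E}(X_n)$.

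For the variance, \eqref{func-eq-var} fits \eqref{gen-func-eq} with the middle sum absent and forcing $\tilde{g}=\tilde{\phi}_3$ exponentially small, so $\tilde{g}\in\JS_{\!\!\xi,0}$ for arbitrarily negative $\xi$. Proposition~\ref{asym-gen-func-eq}(i) then gives
\[
    \tilde{V}(z)=\frac{1}{L}\sum_{k\in\mathbb{Z}}G^{*}(\chi_k)z^{-\chi_k}+O\bigl(|z|^{-1}\bigr),
\]
(with $\Phi^{*}\equiv 0$ in this case), and the resulting Fourier series plays the role of $P_2$.

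The one non-routine point is checking that $P_1$ and $P_2$ are genuinely bounded, which amounts to absolute convergence of the respective Fourier series. This is exactly what JS-admissibility is designed to secure: together with the Exponential Smallness Lemma of \cite{FuHwZa}, the Mellin transforms $\Phi^{*}(s)$ and $G^{*}(s)$ decay exponentially along vertical lines, so $|\Phi^{*}(\chi_k)|$ and $|G^{*}(\chi_k)|$ tend to zero exponentially fast in $|k|$. The main technical nuisance to foresee is the bookkeeping of exponentially small contributions in passing from \eqref{f1z-PGF} to \eqref{func-eq-mean} and in the analogous reduction of \eqref{func-eq-sm} to \eqref{func-eq-var}, so that the hypotheses of Proposition~\ref{asym-gen-func-eq} are cleanly met; once this is done, the theorem drops out.
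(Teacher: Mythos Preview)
Your proposal is correct and follows exactly the route the paper takes: the paper's own ``proof'' of Theorem~\ref{main-thm1-log} is simply the sentence ``Collecting all results, we then derive the asymptotics of the mean and the variance,'' and you have spelled out precisely which results to collect---namely the de-Poissonization relation \eqref{dep-mean-var} together with Proposition~\ref{asym-gen-func-eq}(ii) applied to \eqref{func-eq-mean} and Proposition~\ref{asym-gen-func-eq}(i) applied to \eqref{func-eq-var}. Your observation that the variance equation has no ``middle sum'' (so $\Phi^{*}\equiv 0$) and your justification of the boundedness of $P_1,P_2$ via exponential decay of the Mellin transforms along vertical lines are exactly the details the paper leaves implicit.
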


\paragraph*{Limit law.} We now turn to the limit law. We begin with
considering the Poisson generating function of the probability
generating function of $X_n$
\[
	\tilde{P}(y,z)
	:=e^{-z}\sum_{n\ge 1}\mathbb{E}
	\left(y^{X_n}\right) \frac{z^n}{n!},
\]
which satisfies the functional equation
\[
	\tilde{P}(y,z)=y\tilde{P}(y,\alpha z)
	+y\sum_{1\le j\le k}\lambda_je^{-\beta_jz}
	\tilde{P}(y,\alpha_jz)+(1-y)ze^{-z},
\]
where $\lambda_j\in\{-1,1\}$ and $\alpha,\alpha_j,\beta_j
\in(0,1)$. Observe that
\[
\frac{\tilde{P}(y,z)}{1-y}=\sum_{\ell\ge 0}\tilde{A}_{\ell}(z)y^{\ell},
\]
where
\[
	\tilde{A}_{\ell}(z)
	:=e^{-z}\sum_{n\ge 0}\mathbb{P}(X_n\le\ell)\frac{z^n}{n!}.
\]
By dividing the above functional equation by $1-y$ and reading off
coefficients, we obtain the recursive functional equation
\begin{equation}\label{func-eq-Al}
	\tilde{A}_{\ell+1}(z)
	=\tilde{A}_{\ell}(\alpha z)
	+\sum_{1\le j\le k}\lambda_je^{-\beta_jz}
\tilde{A}_{\ell}(\alpha_jz),\qquad (\ell\ge 0)
\end{equation}
with $\tilde{A}_0(z)=ze^{-z}$.

The equations here are similar to
those in \cite{FuHwMaIt}, and the remaining analysis follows along
the same lines there, which we now sketch. First, define
$\tilde{R}_0(z):=\tilde{A}_0(z)$ and for $\ell\ge 0$
\[
	\tilde{R}_{\ell+1}(z)
	:=\sum_{1\le j\le k}\lambda_j
	e^{-\beta_j z}\tilde{A}_{\ell}(\alpha_jz).
\]
Then
\[
	\tilde{A}_{\ell+1}(z)
	=\tilde{A}_{\ell}(\alpha z)+\tilde{R}_{\ell+1}(z),
\]
which, by iteration, leads to
\[
	\tilde{A}_{\ell}(z)
	=\sum_{0\le j\le \ell}\tilde{R}_{j}
	\bigl(\alpha^{\ell-j}z\bigr).
\]

For the asymptotics, we need the JS-admissibility in a stronger
uniform form.
\begin{lmm} For $\ell\ge1$,
$\tilde{A}_{\ell}(z)$ is uniformly JS-admissible, i.e., for $\vert\arg(z)\vert\le\varepsilon, 0<\varepsilon<\pi/2$,
\begin{equation}\label{I}
	\tilde{A}_{\ell}(z)
	=O\left(\vert z\vert^{\varepsilon_1}\right),
\end{equation}
uniformly in $z$, and, for $\varepsilon\le\vert\arg(z)\vert\le\pi$,
\begin{equation}\label{O}
	e^{z}\tilde{A}_{\ell}(z)
	=O\bigl(e^{(1-\varepsilon_2)\vert z\vert}\bigr),
\end{equation}
uniformly in $z$. Here the involved constants in both cases are
absolute and $\varepsilon_1, \varepsilon_2$ are positive constants.
\end{lmm}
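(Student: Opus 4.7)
The plan is to prove both (I) and (O) simultaneously by induction on $\ell$, using the recursive functional equation~\eqref{func-eq-Al} as the inductive engine. For the base case $\ell=0$, $\tilde{A}_0(z)=ze^{-z}$ satisfies $|\tilde{A}_0(z)|\le|z|e^{-|z|\cos\varepsilon}$ inside the sector $|\arg z|\le\varepsilon$, giving (I) with any $\varepsilon_1>0$ (indeed with exponential decay), while $|e^z\tilde{A}_0(z)|=|z|$ trivially satisfies (O) for any $\varepsilon_2\in(0,1)$.

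For the inductive step of (I), I would work in the sector $|\arg z|\le\varepsilon$, where $\Re z\ge|z|\cos\varepsilon>0$. Applying the induction hypothesis $|\tilde{A}_\ell(w)|\le C|w|^{\varepsilon_1}$ to each occurrence of $\tilde{A}_\ell$ on the right-hand side of~\eqref{func-eq-Al}, the leading term $\tilde{A}_\ell(\alpha z)$ contributes $C\alpha^{\varepsilon_1}|z|^{\varepsilon_1}$ with a strict contraction factor $\alpha^{\varepsilon_1}<1$; each remaining summand $e^{-\beta_jz}\tilde{A}_\ell(\alpha_jz)$ picks up an additional exponentially small factor $|e^{-\beta_jz}|\le e^{-\beta_j|z|\cos\varepsilon}$ that more than compensates for any polynomial growth of $\tilde{A}_\ell(\alpha_jz)$ in $|z|$.

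For the inductive step of (O), I set $f_\ell(z):=e^z\tilde{A}_\ell(z)$, for which~\eqref{func-eq-Al} transforms into
\[
    f_{\ell+1}(z)=e^{(1-\alpha)z}f_\ell(\alpha z)+\sum_{j=1}^k\lambda_je^{(1-\alpha_j-\beta_j)z}f_\ell(\alpha_jz).
\]
Outside the sector, $\Re z\le|z|\cos\varepsilon<|z|$, so the exponential prefactors grow at rate at most $(1-\alpha)|z|\cos\varepsilon$ (respectively $(1-\alpha_j-\beta_j)|z|\cos\varepsilon$, which is nonnegative because $\alpha_j+\beta_j\le1$ in view of the inclusion--exclusion derivation). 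Combining with the induction hypothesis $|f_\ell(w)|\le Ce^{(1-\varepsilon_2)|w|}$ yields an overall growth rate of $(1-\alpha)|z|\cos\varepsilon+(1-\varepsilon_2)\alpha|z|$ for the leading term, which is dominated by $(1-\varepsilon_2)|z|$ as soon as $\varepsilon_2\le 1-\cos\varepsilon$; an identical calculation handles the remaining summands.

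The principal obstacle is ensuring \emph{uniformity of the constants in $\ell$}. A direct induction yields constants that grow geometrically, because the right-hand side of~\eqref{func-eq-Al} has $k+1$ summands. The resolution is to split each estimate into two regimes: for $|z|\le R_0$, the trivial bound $|\tilde{A}_\ell(z)|\le e^{|z|-\Re z}$ is uniform in $\ell$; for $|z|\ge R_0$, the factor $e^{-\beta_j|z|\cos\varepsilon}$ can be made arbitrarily small by choosing $R_0$ large, so the recursive inequality takes the form $C_{\ell+1}\le(\alpha^{\varepsilon_1}+\delta)C_\ell+K$ with $\alpha^{\varepsilon_1}+\delta<1$, giving a uniform bound. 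An entirely analogous argument, exploiting the strict inequality $\cos\varepsilon<1$, handles (O). This is the same strategy as in the analogous lemma of~\cite{FuHwMaIt}, and the detailed bookkeeping transfers essentially verbatim.
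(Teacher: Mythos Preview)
Your proposal is correct and follows exactly the approach the paper intends: the paper's own proof consists only of the sentence ``The proof is similar to that of Lemma~3 in \cite{FuHwMaIt}'', and the argument you outline---induction on $\ell$ via \eqref{func-eq-Al}, using the contraction factor $\alpha^{\varepsilon_1}<1$ for (I) and the gap $\cos\varepsilon<1-\varepsilon_2$ for (O), together with the split into $|z|\le R_0$ (where the trivial bound $|\tilde{A}_\ell(z)|\le e^{|z|-\Re z}$, uniform in $\ell$, applies) and $|z|>R_0$ (where the exponential damping makes the recursive inequality contractive)---is precisely that argument. Your observation that $\alpha_j+\beta_j\le 1$, needed to control the prefactors $e^{(1-\alpha_j-\beta_j)z}$ in the (O) step, is indeed a consequence of the inclusion--exclusion origin of these coefficients.
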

\pf The proof is similar to that of Lemma 3 in \cite{FuHwMaIt}.
\qed

Then, by standard de-Poissonization arguments \cite{HwFuZa,JacSz},
we deduce that
\[
	\mathbb{P}(X_n\le\ell)
	=\sum_{0\le j\le \ell}\tilde{R}_j
	\bigl(\alpha^{\ell-j}n\bigr)
	+O\left(n^{-1+\varepsilon} \right),
\]
uniformly in $\ell$. On the other hand, by the definition of
$\tilde{R}_j(x)$, we see that $\vert \tilde{R}_j(x)\vert\le e^{-px}$
for some $p>0$. Consequently,
\[
	\sum_{j>\ell}\tilde{R}_j\bigl(\alpha^{\ell-j}n\bigr)
	\le\sum_{j>\ell}e^{-p\alpha^{\ell-j}n}
	=\sum_{j\ge 1}e^{-p\alpha^{-j}n}=O\left(e^{-pn}\right).
\]
The latter implies that
\[
	\mathbb{P}(X_n\le\ell)=\sum_{j\ge0}
	\tilde{R}_j\bigl(\alpha^{\ell-j}n\bigr)
	+O\left(n^{-1+\varepsilon} \right),
\]
uniformly in $\ell$.

Replacing now $\ell$ by $\lfloor\log_{1/\alpha}n\rfloor+\ell$,
and writing $\vartheta(n)=\{\log_{1/\alpha}n\}$, we obtain
\[
	\mathbb{P}(X_n\le\lfloor\log_{1/\alpha}n\rfloor+\ell)
	=\sum_{j\ge0}\tilde{R}_j
	\bigl(\alpha^{-\vartheta(n)+\ell-j}\bigr)
	+O\left(n^{-1+\varepsilon} \right).
\]
We summarize the analysis as follows.

\begin{thm}[Log-games] \label{main-thm2-log}
If $\rho=1$, then the number $X_n$ of rounds used by \GJLS\ until a
leader is selected satisfies
\[
	\mathbb{P}(X_n\le\lfloor\log_{1/\alpha}n\rfloor+\ell)
	=\sum_{j\ge0}\tilde{R}_j
	\bigl(\alpha^{-\vartheta(n)+\ell-j}\bigr)
	+O\left(n^{-1}\right),
\]
uniformly in $\ell$.
\end{thm}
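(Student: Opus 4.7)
My plan is to follow the roadmap sketched in the paragraphs immediately preceding the statement and turn those observations into a complete proof. The starting point is the recursive functional equation \eqref{func-eq-Al} for $\tilde{A}_{\ell}(z)$ with initial condition $\tilde{A}_0(z)=ze^{-z}$. With $\tilde{R}_0(z):=\tilde{A}_0(z)$ and $\tilde{R}_{\ell+1}(z):=\sum_{1\le j\le k}\lambda_j e^{-\beta_j z}\tilde{A}_\ell(\alpha_j z)$ as already defined, a straightforward induction on $\ell$ gives the closed form
\[
\tilde{A}_\ell(z)=\sum_{0\le j\le\ell}\tilde{R}_j\bigl(\alpha^{\ell-j}z\bigr).
\]

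The second step is to transfer this Poisson-model identity to the Bernoulli model. This is where the preceding uniform JS-admissibility lemma is essential: feeding the bounds \eqref{I} and \eqref{O}, whose constants are independent of $\ell$, into the Jacquet--Szpankowski analytic de-Poissonization machinery of \cite{JacSz,HwFuZa} yields
\[
\mathbb{P}(X_n\le\ell)=\tilde{A}_\ell(n)+O\bigl(n^{-1+\varepsilon}\bigr)
\]
uniformly in $\ell$. Here $\varepsilon$ can be made arbitrarily small by choosing $\varepsilon_1$ small in the lemma, and I expect the refined Poisson expansion \eqref{poi-heu} (with the $\tilde{A}_\ell''(n)$-correction) to sharpen this to $O(n^{-1})$ in the same way as was done for the mean and variance in \eqref{dep-mean-var}.

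The third step is to extend the finite sum to an infinite one and rescale. For $j\ge 1$ each $\tilde{R}_j(z)$ carries factors $e^{-\beta_i z}$ with $\beta_i>0$; combined with the at-most-polynomial growth of $\tilde{A}_{j-1}$ in the sector $\mathscr{C}_\varepsilon$ (from \eqref{I}) this gives a uniform bound $|\tilde{R}_j(x)|\le e^{-px}$ for some $p>0$ and all $x\ge 0$. Hence the tail is
\[
\sum_{j>\ell}\tilde{R}_j\bigl(\alpha^{\ell-j}n\bigr)
\le\sum_{j\ge 1}e^{-p\alpha^{-j}n}=O\bigl(e^{-pn}\bigr),
\]
which is absorbed by the $O(n^{-1})$ error. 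Replacing $\ell$ by $\lfloor\log_{1/\alpha}n\rfloor+\ell$ and using the elementary identity $\alpha^{\lfloor\log_{1/\alpha}n\rfloor}=n^{-1}\alpha^{-\vartheta(n)}$ converts the argument of $\tilde{R}_j$ into exactly $\alpha^{-\vartheta(n)+\ell-j}$, giving the stated formula.

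The main obstacle I foresee is not any single step above but the uniformity in $\ell$ running through the de-Poissonization: standard de-Poissonization lemmas provide error bounds whose implicit constants depend on the JS-admissibility constants of the function being treated, so one must verify that these constants for $\tilde{A}_\ell$ are independent of $\ell$. This is precisely the force of the preceding lemma, which explicitly asserts \emph{absolute} constants in both \eqref{I} and \eqref{O}; once this uniformity is granted, together with the uniform exponential tail bound on $\tilde{R}_j$, the rest of the argument is essentially bookkeeping in the same spirit as \cite{FuHwMaIt}.
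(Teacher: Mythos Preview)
Your proposal is correct and follows essentially the same approach as the paper: the decomposition $\tilde{A}_\ell(z)=\sum_{0\le j\le\ell}\tilde{R}_j(\alpha^{\ell-j}z)$, the uniform de-Poissonization via the preceding JS-admissibility lemma giving $O(n^{-1+\varepsilon})$, the exponential tail bound $|\tilde{R}_j(x)|\le e^{-px}$ to extend the sum, and the final substitution $\ell\mapsto\lfloor\log_{1/\alpha}n\rfloor+\ell$ are exactly the steps the paper sketches. Your identification of the key issue---that uniformity in $\ell$ rests on the absolute constants in \eqref{I} and \eqref{O}---and of the mechanism for the improved $O(n^{-1})$ error (expanding further terms in the de-Poissonization) also match the paper's remarks.
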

The improved error term comes from refining the above analysis
(by expanding more terms in the de-Poissonization procedure).

\begin{Rem}\label{exp-ll}
It is possible to derive more explicit expressions for
$\tilde{A}_{\ell}(z)$ and $\tilde{R}_j(z)$ in
Theorem~\ref{main-thm2-log}, but they are generally messy.
For that purpose, rewrite (\ref{func-eq-Al}) as
\[
	\tilde{A}_{\ell+1}(z)=\sum_{0\le j\le k}\lambda_j
	e^{-\beta_jz}\tilde{A}_{\ell}(\alpha_jz),
\]
where $\lambda_0=1$, $\alpha_0=\alpha$ and $\beta_0=0$. Iterating
yields
\[
	\tilde{A}_{\ell}(z)
	=\sum_{\mathbf{j}=(j_1,\ldots,j_{\ell})
	\in\{0,\ldots,k\}^{\ell}}
	\lambda_{\mathbf j}
	e^{-\sum_{i=1}^{\ell}\alpha_{j_1}
	\cdots\alpha_{j_{i-1}}\beta_{j_i}z}
	\tilde{A}_0({\alpha}_{\mathbf j} z),
\]
where $\lambda_{\mathbf{j}}=\lambda_{j_1} \lambda_{j_2}
\cdots\lambda_{j_{\ell}}$, and ${\alpha}_{\mathbf{j}}
=\alpha_{j_1}\alpha_{j_2}\cdots\alpha_{j_{\ell}}$.
Since $\tilde{A}_0(z)=ze^{-z}$, we then have
\[
	\tilde{A}_{\ell}(z)
	=z\sum_{\mathbf{j}=(j_1,\ldots,j_{\ell})
	\in\{0,\ldots,k\}^{\ell}}{\alpha}_{\mathbf j}
	\lambda_{\mathbf j}e^{-\sum_{i=1}^{\ell+1}\alpha_{j_1}
	\cdots\alpha_{j_{i-1}}\beta_{j_i}z},
\]
with $j_{\ell+1}:=1$. We will give some examples for which this
expression simplifies in Section~\ref{examples}.
\end{Rem}

\subsection{Exp-games}\label{exp-case}

In this section we consider the exp-games for which $\rho<1$ and the
probability $\varpi_n$ of tie occurring satisfies $\varpi_n\sim
\nu\rho^n$. We show that $X_n$ converges (with all its moments) to an
exponential distribution.

\begin{thm}[Exp-games]\label{main-thm-exp}
Assume $\rho<1$. The number of rounds $X_n$ used to select a leader
by \GJLS\ converges in distribution and with all moments to an
exponential distribution:
\[
   \nu\rho^nX_n\stackrel{d}{\longrightarrow}{\rm Exp}(1),
\]
where ${\rm Exp}(1)$ denotes an exponential random variable with
mean one. In particular,
\[
    \mathbb{E}(X_n)\sim\nu^{-1}\rho^{-n} .
\]
\end{thm}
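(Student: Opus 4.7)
The plan is to apply the method of moments, working with the second distributional recurrence \eqref{dis-rec-2}, $X_n \stackrel{d}{=} X_{J_n} + T_n$ with $X_{J_n}$ and $T_n$ independent. The reason this form is preferable is that $T_n$ is geometric with parameter $\varpi_n \sim \nu\rho^n \to 0$, so the rescaled variable $\nu\rho^n T_n$ converges to $\mathrm{Exp}(1)$ with all moments by a direct computation. The whole proof is then driven by showing that $X_{J_n}$ is negligible compared to $T_n$ at every moment.

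The key technical estimate is a bound on the generating function $\mathbb{E}(\rho^{-r J_n})$ for each fixed integer $r\ge 1$. Using \eqref{Jn}, the estimate $\pi_j^{(W)} \le (\sum_{i\in W} p_i)^j$ and the analogous bound for $\pi_{n-j}^{(D)}$, together with $\varpi_n \sim \nu\rho^n$, I would show
\[
    \mathbb{E}\bigl(\rho^{-r J_n}\bigr)
    \le \frac{1}{\varpi_n}\sum_{S=W\cup D\in\WD}
    \bigl(p_W\rho^{-r}+p_D\bigr)^n
    = O\bigl(\lambda_r^n\bigr),
\]
where, for WOD sets $S$ attaining $\rho$, one writes $a := p_W/\rho\in(0,1)$ and gets $\lambda_r = a\rho^{-r}+(1-a)$; non-maximal sets contribute something exponentially smaller. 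The crucial algebraic fact is
\[
    \lambda_r \rho^r
    = a + (1-a)\rho^r < 1 \qquad(r\ge 1),
\]
because $\rho^r<1$. Thus $\mathbb{E}(\rho^{-rJ_n})=o(\rho^{-rn})$, i.e. $J_n$ is (typically) of size $n\cdot a<n$, and this gap by a strictly smaller factor is what makes the whole argument work.

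With this in hand, I would first establish the a priori bound $\mathbb{E}(X_n^r) = O(\rho^{-rn})$ by induction on $n$. Setting $M_r(n):=\rho^{rn}\mathbb{E}(X_n^r)$ and using Minkowski on $X_n=X_{J_n}+T_n$, the estimates $\|T_n\|_r=O(\rho^{-n})$ and $\|X_{J_n}\|_r^r = O(\max_{j<n}M_r(j))\cdot (\lambda_r\rho^r)^n\rho^{-rn}$ give a self-bounding recursion for $M_r(n)$, whose right-hand side is dominated by a constant once $n$ is large (because $\lambda_r\rho^r<1$), closing the induction.

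Then, by induction on $r$, I would prove $(\nu\rho^n)^r\mathbb{E}(X_n^r)\to r!$. Expanding,
\[
    \mathbb{E}(X_n^r)
    =\sum_{k=0}^r\binom{r}{k}\mathbb{E}(X_{J_n}^k)\,\mathbb{E}(T_n^{r-k}).
\]
The $k=0$ term contributes $\mathbb{E}(T_n^r)\sim r!/(\nu\rho^n)^r$ from the geometric moments. For $k\ge 1$, the a priori bound combined with the key estimate yields $\mathbb{E}(X_{J_n}^k)=O(\lambda_k^n)=o(\rho^{-kn})$, so $(\nu\rho^n)^k\mathbb{E}(X_{J_n}^k)\to 0$, while $(\nu\rho^n)^{r-k}\mathbb{E}(T_n^{r-k})\to(r-k)!$ stays bounded; hence all $k\ge 1$ terms drop out in the limit. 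Since $\mathrm{Exp}(1)$ is determined by its moments, moment convergence upgrades to convergence in distribution, and the particular case $r=1$ gives $\mathbb{E}(X_n)\sim \nu^{-1}\rho^{-n}$. The main obstacle is really just the one inequality $\lambda_r\rho^r<1$; everything else is standard bookkeeping once that geometric gap is isolated.
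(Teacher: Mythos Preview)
Your proposal is correct and follows essentially the same route as the paper: both use the recurrence \eqref{dis-rec-2}, the method of moments, an a priori bound $\mathbb{E}(X_n^r)=O(\rho^{-rn})$ by induction, and then a refinement showing the cross terms vanish. Your key inequality $\lambda_r\rho^r=a+(1-a)\rho^r<1$ is exactly the paper's $\tilde\rho<\rho$ (with $\tilde\rho:=\max_{S}\{p_W+\rho\,p_D\}$) specialized to a maximal WOD set; the paper just writes the moment expansion directly rather than packaging it as $\mathbb{E}(\rho^{-rJ_n})$, and uses the binomial recursion in place of your Minkowski step. One small point to tidy up: when $\nu>1$ there may be several maximal WOD sets with different values of $a=p_W/\rho$, so $\lambda_r$ should be taken as the maximum over them (and over non-maximal sets, which as you note are smaller).
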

The proof relies on the recurrence \eqref{dis-rec-2} using the method
of moments.

First, taking the $m$-th moment on both sides of (\ref{dis-rec-2})
gives ($\mu_{n,m} := \mathbb{E}\left(X_n^{m}\right)$)
\begin{align}
	\mu_{n,m}
	&=\sum_{0\le k\le m}\binom{m}{k}
	\mathbb{E}\left(T_n^{m-k}\right)
	\mu_{J_n,k}\nonumber\\
	&=\frac{1}{\varpi_n}\sum_{0\le k\le m}
	\binom{m}{k}\mathbb{E}\left(T_n^{m-k}\right)
	\sum_{1\le j<n}\sum_{\substack{S\in\WD\\ S=W\cup D}}
	\binom{n}{j}\pi_j^{(W)}\pi_{n-j}^{(D)}
	\mu_{j,k}\label{rec-mth-mom}.
\end{align}
We single out the terms corresponding to $k=m$ and have
\begin{align}
	\mu_{n,m}
	&=\frac{1}{\varpi_n}\sum_{1\le j<n}
	\sum_{\substack{S\in\WD\\ S=W\cup D}}
	\binom{n}{j}\pi_j^{(W)}\pi_{n-j}^{(D)}
	\mu_{j,m}\nonumber\\
	&\qquad+\frac{1}{\varpi_n}\sum_{0\le k<m}
	\binom{m}{k}\mathbb{E}\left(T_n^{m-k}\right)
	\sum_{1\le j<n}\sum_{\substack{S\in\WD\\ S=W\cup D}}
	\binom{n}{j}\pi_j^{(W)}\pi_{n-j}^{(D)}
	\mu_{j,k}.\label{rec-mth-mom-1}
\end{align}
The proof proceeds in two steps: first, we derive an upper bound for
$\mu_{n,m}$, and then we refine it and get a more precise asymptotic
approximation to $\mu_{n,m}$.

\begin{lmm}\label{bound-Tn}
For $m\ge 0$,
\[
    \mathbb{E}(T_n^{m})
	=O\left(\rho^{-nm}\right).
\]
\end{lmm}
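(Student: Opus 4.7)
The plan is to exploit directly the fact that $T_n$ is geometric with parameter $\varpi_n$ and then import the asymptotic estimate $\varpi_n \sim \nu \rho^n$ from \eqref{prob-no-tie}. The point of the lemma is purely a conversion statement: moments of $T_n$ of order $m$ blow up like $\varpi_n^{-m}$, and $\varpi_n^{-m} = O(\rho^{-nm})$.

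First I would recall the standard closed form for the moments of a geometric random variable. If $T$ is geometric with parameter $p\in(0,1)$, then its probability generating function is $G(s) = ps/(1-(1-p)s)$, from which one can extract
\[
   \mathbb{E}(T^m) = \sum_{j=0}^{m} S(m,j)\, j!\,\frac{(1-p)^j}{p^j},
\]
where $S(m,j)$ are Stirling numbers of the second kind; in particular $\mathbb{E}(T^m) = m!\,p^{-m} + O(p^{-m+1})$ as $p\to 0$. So for any fixed $m\ge0$ there is a constant $C_m$ such that $\mathbb{E}(T^m) \le C_m p^{-m}$ for all $p$ bounded away from $1$.

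Next I would apply this with $p = \varpi_n$. Since $\varpi_n \sim \nu \rho^n\to 0$ and in particular $\varpi_n$ stays bounded away from $1$ for $n$ large, the above bound yields $\mathbb{E}(T_n^m) \le C_m \varpi_n^{-m}$. Invoking \eqref{prob-no-tie}, we get $\varpi_n^{-m} \le C'_m \rho^{-nm}$ for $n$ large, with $C'_m$ depending only on $m$ (and on $\nu,\rho$). Handling the finitely many small $n$ separately then gives $\mathbb{E}(T_n^m) = O(\rho^{-nm})$ as claimed.

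There is essentially no obstacle here: the lemma is a bookkeeping step that will be used in the next stage of bounding $\mu_{n,m}$. The only minor care needed is to check that $\varpi_n$ does not approach $1$ for some small $n$, but even in that degenerate case $\mathbb{E}(T_n^m)$ is bounded by a constant and the big-$O$ statement holds trivially.
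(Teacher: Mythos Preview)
Your proof is correct and follows essentially the same line as the paper's: the paper simply notes that $\mathbb{E}(T_n^m)=P(\varpi_n^{-1})$ for a polynomial $P$ of degree $m$ (without constant term) and then invokes $\varpi_n\sim\nu\rho^n$, while you make this polynomial explicit via the Stirling-number expansion. The two arguments are the same idea at different levels of detail.
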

\pf This follows from the fact that $\mathbb{E}(T_n^m)
=P\left(\varpi_n^{-1}\right)$, where $P(x)$ is a polynomial of degree
$m$ (without constant term), and the asymptotics \eqref{prob-no-tie}
of $\varpi_n$. \qed

We now use this to find a similar bound for $\mu_{n,m}$.
\begin{lmm}
For $m\ge 0$,
\[
    \mu_{n,m}=O\left(\rho^{-nm}\right).
\]
\end{lmm}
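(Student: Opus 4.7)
The plan is to prove the bound by a double induction: an outer induction on $m\ge 0$, with an inner induction on $n$ for each fixed $m$. The base case $m=0$ is trivial since $\mu_{n,0}=1$ for all $n$, matching $\rho^0=1$.

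For the outer inductive step, I would first massage the recurrence \eqref{rec-mth-mom-1} using the key observation that the coefficient of $\mu_{j,k}$ in the inner sum is exactly $\varpi_n\,\mathbb{P}(J_n=j)$, so \eqref{rec-mth-mom-1} rewrites as
\[
\mu_{n,m}=\mathbb{E}\bigl(\mu_{J_n,m}\bigr)
+\sum_{0\le k<m}\binom{m}{k}\mathbb{E}\bigl(T_n^{m-k}\bigr)\,\mathbb{E}\bigl(\mu_{J_n,k}\bigr),
\]
and crucially the coefficients in the first term form a probability distribution on $\{1,\dots,n-1\}$, so that sum is genuinely an average. Using the outer induction hypothesis $\mu_{j,k}=O(\rho^{-jk})$ for $k<m$ together with $J_n\le n-1$ and $\rho<1$ (so $\rho^{-jk}$ is increasing in $j$), one gets $\mathbb{E}(\mu_{J_n,k})=O(\rho^{-(n-1)k})$; combining this with Lemma~\ref{bound-Tn}, each summand is $O(\rho^{-n(m-k)-(n-1)k})=O(\rho^{-nm+k})=O(\rho^{-nm})$, because $k<m$ and $\rho<1$. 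Thus the entire ``lower-order'' sum is bounded by some constant $B\rho^{-nm}$ independent of $n$.

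For the inner induction on $n$, I claim $\mu_{n,m}\le A\rho^{-nm}$ for a suitable constant $A$, specifically any $A\ge B/(1-\rho^m)$ (this is finite since $\rho<1$ and $m\ge 1$). The base case $n=1$ holds trivially because $\mu_{1,m}=0$. For the step, the induction hypothesis gives $\mu_{j,m}\le A\rho^{-jm}$ for all $j<n$, so
\[
\mathbb{E}\bigl(\mu_{J_n,m}\bigr)\le A\,\mathbb{E}\bigl(\rho^{-J_n m}\bigr)\le A\rho^{-(n-1)m}=A\rho^{m}\cdot\rho^{-nm},
\]
and combining with the bound on the lower-order term yields $\mu_{n,m}\le(A\rho^m+B)\rho^{-nm}\le A\rho^{-nm}$ by the choice of $A$, closing the induction.

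There is no real obstacle here; the proof is essentially bookkeeping once one spots the decisive fact that the weights in the first sum sum to $1$, which prevents any inflation when averaging $\mu_{j,m}$ over $j<n$. The only mildly delicate point is ensuring the constant $A$ can be chosen independently of $n$, which is why one needs the factor $1-\rho^m>0$ in the denominator; this is where $\rho<1$ (the defining property of the exp-game regime) is used in an essential way, beyond its role in Lemma~\ref{bound-Tn}.
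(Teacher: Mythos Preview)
Your proof is correct and follows essentially the same approach as the paper: the same double induction (outer on $m$, inner on $n$), the same decomposition into the leading $k=m$ term and the lower-order sum, the same use of Lemma~\ref{bound-Tn}, and the same choice of constant $c_m\ge d_m/(1-\rho^m)$. The only difference is cosmetic: you phrase the sums as expectations over $J_n$ (making the ``weights sum to $1$'' observation explicit), whereas the paper writes out the sums $\frac{1}{\varpi_n}\sum_{j,S}\binom{n}{j}\pi_j^{(W)}\pi_{n-j}^{(D)}$ directly.
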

\pf We prove by induction that
\begin{align} \label{munm-bd1}
    \mu_{n,m}\le c_m\rho^{-nm},
\end{align}
for some constants $c_m$.

Assume that the bound \eqref{munm-bd1} is proved for all moments of
order $<m$ and for the $m$-th moment for all indices $<n$. To prove
it for $n$, we use (\ref{rec-mth-mom-1}). First, observe that
\begin{align*}
	\frac{1}{\varpi_n}&\sum_{0\le k<m}
	\binom{m}{k}\mathbb{E}(T_n^{m-k})
	\sum_{1\le j<n}\sum_{\substack{S\in\WD\\ S=W\cup D}}
	\binom{n}{j}\pi_j^{(W)}\pi_{n-j}^{(D)}\mu_{j,k}\\
	&=O\left(\frac{1}{\varpi_n}
	\sum_{0\le k<m}\rho^{-n(m-k)}\sum_{1\le j<n}
	\sum_{\substack{S\in\WD\\ S=W\cup D}}
	\binom{n}{j}\pi_j^{(W)}\pi_{n-j}^{(D)}
	\rho^{-jk}\right)\\
	&=O\left(\rho^{-nm}\right).
\end{align*}
Thus
\[
	\frac{1}{\varpi_n}\sum_{0\le k<m}
	\binom{m}{k}\mathbb{E}(T_n^{m-k})
	\sum_{1\le j<n}\sum_{\substack{S\in\WD\\ S=W\cup D}}
	\binom{n}{j}\pi_j^{(W)}\pi_{n-j}^{(D)}\mu_{j,k}
	\le d_m\rho^{-nm}
\]
for a suitable constant $d_m$. By this bound, induction hypothesis
and \eqref{rec-mth-mom-1}, we obtain that
\begin{align*}
	\mu_{n,m}
	&\le\frac{c_m}{\varpi_n}\sum_{1\le j<n}
	\sum_{\substack{S\in\WD\\ S=W\cup D}}
	\binom{n}{j}\pi_j^{(W)}\pi_{n-j}^{(D)}
	\rho^{-jm}+d_m\rho^{-nm}\\
	&\le\left(\rho^mc_m+d_m\right)\rho^{-nm}
	\le c_m\rho^{-nm},
\end{align*}
where the last step follows by choosing $c_m$ such that $c_m\ge
d_m/(1-\rho^m)$.\qed

Now we refine Lemma \ref{bound-Tn}.
\begin{lmm}\label{ref-toll}
For $m\ge 0$,
\begin{align} \label{Tnm-bd}
	\mathbb{E}(T_n^{m})
	\sim \frac{m!}{\nu^m\rho^{nm}}.
\end{align}
\end{lmm}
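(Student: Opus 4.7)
The plan is to sharpen Lemma \ref{bound-Tn} by exploiting that $T_n$ is exactly geometric with parameter $\varpi_n$, for which all moments admit closed-form expressions. The key observation is that the $m$-th falling factorial moment of a geometric random variable on $\{1,2,\ldots\}$ with parameter $p$ equals $m!(1-p)^{m-1}/p^m$; this can be derived directly by differentiating the probability generating function $\mathbb{E}(s^{T_n})=\varpi_n s/(1-(1-\varpi_n)s)$ a total of $m$ times at $s=1$.

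First I would establish the exact identity
\[
    \mathbb{E}\bigl[T_n(T_n-1)\cdots(T_n-m+1)\bigr]
    =\frac{m!\,(1-\varpi_n)^{m-1}}{\varpi_n^m}.
\]
Then, converting between ordinary and falling factorial moments via Stirling numbers of the second kind gives
\[
    \mathbb{E}(T_n^m)
    =\sum_{1\le k\le m}S(m,k)\,\mathbb{E}\bigl[(T_n)_k\bigr]
    =\frac{m!}{\varpi_n^m}+O\bigl(\varpi_n^{-(m-1)}\bigr),
\]
since only the $k=m$ summand contributes to leading order (lower-$k$ summands contribute $O(\varpi_n^{-k})$, and $(1-\varpi_n)^{m-1}=1+O(\varpi_n)$). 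Finally, substituting the asymptotics $\varpi_n\sim\nu\rho^n$ from \eqref{prob-no-tie} yields $\mathbb{E}(T_n^m)\sim m!/(\nu^m\rho^{nm})$, which is \eqref{Tnm-bd}.

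There is no real conceptual obstacle here: the geometric law is elementary enough that exact formulas are available, and the whole argument reduces to isolating the leading term in $1/\varpi_n$. An alternative and perhaps more probabilistic route would be to observe that the rescaled geometric $\varpi_n T_n$ converges in distribution to $\text{Exp}(1)$, and then promote this to convergence of all moments by verifying uniform integrability of $(\varpi_n T_n)^m$ through Lemma \ref{bound-Tn} applied at moment $m+1$. Either route delivers \eqref{Tnm-bd}, but the factorial-moment computation is the most direct and also exposes the full polynomial structure foreshadowed in the proof of Lemma \ref{bound-Tn}.
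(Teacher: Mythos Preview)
Your argument is correct. The falling-factorial identity $\mathbb{E}[(T_n)_m]=m!(1-\varpi_n)^{m-1}/\varpi_n^m$ is right, and isolating the $k=m$ term in the Stirling expansion cleanly delivers the leading asymptotics.

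Your approach differs from the paper's, though. The paper argues via the moment generating function: it writes down $\mathbb{E}(e^{\varpi_n T_n t})$ explicitly, lets $\varpi_n\to 0$ to obtain the limit $(1-t)^{-1}$, and then reads off convergence of all moments of $\varpi_n T_n$ to those of $\mathrm{Exp}(1)$. This is essentially the ``alternative route'' you mention at the end, except that the paper justifies moment convergence through MGF convergence on a neighborhood of the origin rather than through a uniform-integrability argument via Lemma~\ref{bound-Tn}. Your factorial-moment computation is the more elementary of the two and, as you note, makes the polynomial structure $\mathbb{E}(T_n^m)=P(\varpi_n^{-1})$ from the proof of Lemma~\ref{bound-Tn} fully explicit; the paper's MGF route is slightly more conceptual and avoids the Stirling-number bookkeeping. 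Either is perfectly adequate for this lemma.
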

\pf The moment generating function of $\varpi_nT_n$ is given by
\[
	\mathbb{E}(e^{\varpi_nT_nt})
	=\frac{\varpi_ne^{\varpi_nt}}
	{1-(1-\varpi_n)e^{\varpi_nt}}.
\]
Since $\varpi_n\to 0$, we see that
\[
    \mathbb{E}(e^{\varpi_nT_nt})\to\frac{1}{1-t},
\]
for $|t|<1$. The latter is the moment generating function of an
exponential distribution with mean $1$. Consequently, we have
\[
    \mathbb{E}(\varpi_nT_n)^m\to m! \qquad(m\ge0),
\]
which, together with the asymptotics of $\varpi_n$, proves
\eqref{Tnm-bd}.\qed

We now prove a similar result for $\mu_{n,m}$. Rewrite
\eqref{rec-mth-mom} as follows.
\begin{equation}\label{rec-mth-mom-2}
	\mathbb{E}(X_n^{m})=\mathbb{E}(T_n^{m})
	+\frac{1}{\varpi_n}\sum_{1\le k\le m}
	\binom{m}{k}\mathbb{E}(T_n^{m-k})\sum_{1\le j<n}
	\sum_{\substack{S\in\WD\\ S=W\cup D}}
	\binom{n}{j}\pi_j^{(W)}\pi_{n-j}^{(D)}
	\mu_{j,k}.
\end{equation}
\begin{lmm}
For $m\ge 0$,
\begin{align}\label{munm-bd2}
	\mu_{n,m}
	\sim \frac{m!}{\nu^m\rho^{nm}}.
\end{align}
\end{lmm}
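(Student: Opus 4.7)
The plan is to prove \eqref{munm-bd2} by induction on $m$, starting from the recurrence \eqref{rec-mth-mom-2} and combining the refined toll estimate \eqref{Tnm-bd} with the already established crude bound $\mu_{n,m}=O(\rho^{-nm})$. The base case $m=0$ is immediate, since $\mu_{n,0}=1=0!/\nu^0$.

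For the inductive step, it is convenient to work with the normalized quantity $\tilde{\mu}_{n,m}:=\rho^{nm}\mu_{n,m}$, which is $O(1)$ by the preceding lemma. Multiplying \eqref{rec-mth-mom-2} through by $\rho^{nm}$ and splitting $\rho^{nk}=\rho^{jk}\rho^{(n-j)k}$ gives
\[
    \tilde{\mu}_{n,m}
    =\rho^{nm}\mathbb{E}(T_n^m)
    +\frac{1}{\varpi_n}\sum_{1\le k\le m}\binom{m}{k}
    \rho^{n(m-k)}\mathbb{E}(T_n^{m-k})\,\Sigma_{n,k},
\]
where
\[
    \Sigma_{n,k}:=\sum_{1\le j<n}\sum_{\substack{S\in\WD\\S=W\cup D}}
    \binom{n}{j}\pi_j^{(W)}\pi_{n-j}^{(D)}\rho^{(n-j)k}\tilde{\mu}_{j,k}.
\]
By Lemma \ref{ref-toll}, the first term tends to $m!/\nu^m$, which is exactly the desired limit; the prefactors $\rho^{n(m-k)}\mathbb{E}(T_n^{m-k})$ also admit finite limits for $1\le k\le m$. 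It therefore suffices to show that $\Sigma_{n,k}/\varpi_n\to 0$ for every $k\in\{1,\ldots,m\}$.

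For this, write $p_W:=\sum_{h\in W}p_h$ and $p_D:=\sum_{h\in D}p_h$, and use the elementary bounds $\pi_j^{(W)}\le p_W^j$ and $\pi_{n-j}^{(D)}\le p_D^{n-j}$ (immediate from the multinomial definition), together with the uniform estimate $|\tilde{\mu}_{j,k}|\le c_k$ supplied by the inductive hypothesis for $k<m$ and by the earlier $O(\rho^{-nm})$-bound for $k=m$. The binomial theorem then yields
\[
    |\Sigma_{n,k}|\le c_k\sum_{S\in\WD}\bigl(p_W+p_D\rho^k\bigr)^n.
\]
For every WOD set and every $k\ge 1$, the inequality $p_D\rho^k<p_D$ (valid since $p_D>0$ and $\rho<1$) yields $p_W+p_D\rho^k<p_W+p_D\le\rho$ strictly; as $\WD$ is finite, one can fix some $\rho'<\rho$ with $|\Sigma_{n,k}|=O((\rho')^n)$, and \eqref{prob-no-tie} then gives $\Sigma_{n,k}/\varpi_n=O((\rho'/\rho)^n)\to 0$. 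The main (though not very deep) subtlety is the $k=m$ contribution: since it contains $\tilde{\mu}_{j,m}$ with $j<n$, one cannot invoke the inductive hypothesis and must instead rely on the previously established uniform boundedness of $\tilde{\mu}_{j,m}$. The strict inequality $p_W+p_D\rho^k<\rho$ for $k\ge 1$ is precisely what forces this recursive self-contribution to be negligible rather than merely bounded, thereby closing the induction.
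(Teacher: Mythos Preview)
Your proof is correct and follows essentially the same route as the paper: both arguments start from \eqref{rec-mth-mom-2}, use Lemma~\ref{ref-toll} to identify the main term $\mathbb{E}(T_n^m)\sim m!/(\nu^m\rho^{nm})$, and then bound the correction sum via $\mu_{j,k}=O(\rho^{-jk})$ together with $\pi_j^{(W)}\le p_W^j$, $\pi_{n-j}^{(D)}\le p_D^{n-j}$ and the binomial theorem, arriving at $(p_W+\rho^k p_D)^n\le\tilde\rho^n$ with $\tilde\rho:=\max_{S\in\WD}(p_W+\rho p_D)<\rho$. The paper carries this out without normalization and without framing it as an induction on $m$: since the preceding lemma already supplies the uniform bound $\mu_{j,k}=O(\rho^{-jk})$ for \emph{all} $k$, your inductive hypothesis for $k<m$ is never actually invoked and the argument works in one pass.
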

\pf We use (\ref{rec-mth-mom-2}). First the second term on the right
hand side of (\ref{rec-mth-mom-2}) is bounded above by
\begin{align*}
	\frac{1}{\varpi_n}&\sum_{1\le k\le m}\binom{m}{k}
	\mathbb{E}(T_n^{m-k})\sum_{1\le j<n}
	\sum_{\substack{S\in\WD\\ S=W\cup D}}
	\binom{n}{j}\pi_j^{(W)}\pi_{n-j}^{(D)}\mu_{j,k}\\
	&=O\left(\rho^{-n}\sum_{1\le k\le m}\rho^{-n(m-k)}
	\sum_{\substack{S\in\WD\\ S=W\cup D}}
	\sum_{1\le j<n}\binom{n}{j}\pi_j^{(W)}\pi_{n-j}^{(D)}
	\rho^{-jk}\right)\\
	&=O\left(\sum_{1\le k\le m}\rho^{n(m-k+1)}
	\sum_{\substack{S\in\WD\\ S=W\cup D}}
	\left(\rho^{-k}\sum_{j\in W}p_j
	+\sum_{\ell\in D}p_\ell\right)^n\right)\\
	&=O\left(\left(\frac{\tilde{\rho}}{\rho}\right)^n
	\rho^{-nm}\right),
\end{align*}
where
\[
	\tilde{\rho}=\max\left\{\sum_{j\in W}p_j
	+\rho\sum_{j\in D} p_j\ :\ S=W\cup D\in\WD\right\}.
\]
Since $\tilde{\rho}<\rho$, we see that $(\tilde{\rho}/\rho)^n$ is
exponentially smaller than 1. Thus by substituting the above estimate
and Lemma \ref{ref-toll} into (\ref{rec-mth-mom-2}), we prove
\eqref{munm-bd2}.\qed

The estimates \eqref{munm-bd2} imply the convergence in distribution
of $X_n$ by the method of moment. This proves
Theorem~\ref{main-thm-exp}.

\section{Applications}\label{examples}

We apply in this section the previous theorems to a few concrete
cases.

\paragraph{\CTLS.} For leader selection by coin tossing, the set of
hands is given by $\{\mathscr{H}_1,\mathscr{H}_2\}$, where
$\mathscr{H}_1$ corresponds to head, and $\mathscr{H}_2$ to tail. The
only one WOD set is $\{\mathscr{H}_1,\mathscr{H}_2\}$, where
$\mathscr{H}_1$ is the winning hand and $\mathscr{H}_2$ the losing
hand. The remaining subsets of hands all lead to ties. Thus \CTLS\ is
a log-game and both Theorems~\ref{main-thm1-log} and
\ref{main-thm2-log} apply.

For instance, in the case of an unbiased coin (i.e.,
$p_1=p_2=\frac12$), we obtain $\varpi_n=1-2^{1-n}$,
and for (\ref{In})
\[
	\mathbb{P}(I_n=j)
	=\begin{cases}
	    \displaystyle 2^{1-n},
		&\text{if}\ j=n;\\
		{\displaystyle 2^{-n}\binom{n}{j}},
		&\text{if}\ 1\le j<n.
	\end{cases}
\]
Thus the functional equation (\ref{gen-func-eq}) satisfied by the
Poisson generating functions of the moments has the form
\[
    \tilde{f}(z)=\left(1+e^{-z/2}\right)
    \tilde{f}\left(\frac{z}{2}\right)+\tilde{g}(z),
\]
with $\tilde{f}(0)=\tilde{g}(0)=0$,
which can be further simplified by considering
\[
	\hat{f}(z):=\frac{\tilde{f}(z)}{1-e^{-z}},
	\qquad\hat{g}(z):=\frac{\tilde{g}(z)}{1-e^{-z}},
\]
so that (by $\hat{g}(0)=0$)
\[
    \hat{f}(z)=\hat{f}\left(\frac{z}{2}\right)+\hat{g}(z)
    = \sum_{j\ge0}\hat{g}\left(\frac z{2^j}\right).
\]
From this we can derive an explicit expression for
the Fourier coefficients in Proposition \ref{asym-gen-func-eq}
(compare Remark \ref{exp-four}); see \cite{Pr} for details.

Furthermore, the Poisson generating function for the
distribution function satisfies (see (\ref{func-eq-Al}))
\[
	\tilde{A}_{\ell+1}(z)
	=\left(1+e^{-z/2}\right)\tilde{A}_{\ell}
	\left(\frac{z}{2}\right).
\]
Then by Remark \ref{exp-ll}, the sum on the right-hand side in
Theorem \ref{main-thm2-log} indeed simplifies to the expression in
(\ref{unbiased-ll}).

In addition, our results also apply to the case of an unbiased coin
($p_1\ne p_2$) and one then recovers the results from \cite{JaSz}.
In this case, the sum on the right-hand side in
Theorem \ref{main-thm2-log} can be written more elegantly as an
integral with respect to a suitable defined point measure; see
\cite{JaSz} for details.

\paragraph{\RPSLS.}
For the classical Janken (RPS) game, the set of hands is
$\{\mathscr{H}_1,\mathscr{H}_2,\mathscr{H}_3\}$ and all subsets of
cardinality two are WOD sets, i.e., $\{\mathscr{H}_1,\mathscr{H}_2\},
\{\mathscr{H}_1,\mathscr{H}_3\}$ and $\{\mathscr{H}_2,
\mathscr{H}_3\}$, all other subsets leading to ties. Thus \RPSLS\ is
an exp-game and Theorem~\ref{main-thm-exp} applies.

For instance, for $p_1=p_2=p_3=\frac13$, we have $\rho=\frac23$ and $\nu=3$ and one recovers the main result from \cite{MaUe}; see also Section \ref{intro}.

\paragraph{Other three-hand games.} Apart from \RPSLS\ many other
games with three hands are possible. We content ourselves here with a
short discussion of games defined on a dominance graph with
$p_1=p_2=p_3=\frac13$. All five connected dominance graphs are given
as follows.
\newpage
\begin{center}
\begin{tikzpicture}
\draw (0cm,0cm) node[circle,inner sep=2,fill,draw] (1) {};
\draw (0.5cm,1cm) node[circle,inner sep=2,fill,draw] (2) {};
\draw (1cm,0cm) node[circle,inner sep=2,fill,draw] (3) {};
\draw (0.5cm,-0.5cm) node (4) {I};

\draw[-stealth,line width=0.4mm] (1) -- (3);
\draw[-stealth,line width=0.4mm] (3) -- (2);
\draw[-stealth,line width=0.4mm] (2) -- (1);

\draw (2.5cm,0cm) node[circle,inner sep=2,fill,draw] (1) {};
\draw (3cm,1cm) node[circle,inner sep=2,fill,draw] (2) {};
\draw (3.5cm,0cm) node[circle,inner sep=2,fill,draw] (3) {};
\draw (3cm,-0.5cm) node (4) {II};

\draw[-stealth,line width=0.4mm] (2) -- (1);
\draw[-stealth,line width=0.4mm] (2) -- (3);
\draw[-stealth,line width=0.4mm] (1) -- (3);

\draw (5cm,0cm) node[circle,inner sep=2,fill,draw] (1) {};
\draw (5.5cm,1cm) node[circle,inner sep=2,fill,draw] (2) {};
\draw (6cm,0cm) node[circle,inner sep=2,fill,draw] (3) {};
\draw (5.5cm,-0.5cm) node (4) {III};

\draw[-stealth,line width=0.4mm] (2) -- (1);
\draw[-stealth,line width=0.4mm] (2) -- (3);

\draw (7.5cm,0cm) node[circle,inner sep=2,fill,draw] (1) {};
\draw (8cm,1cm) node[circle,inner sep=2,fill,draw] (2) {};
\draw (8.5cm,0cm) node[circle,inner sep=2,fill,draw] (3) {};
\draw (8cm,-0.5cm) node (4) {IV};

\draw[-stealth,line width=0.4mm] (1) -- (2);
\draw[-stealth,line width=0.4mm] (3) -- (2);

\draw (10.15cm,-0.5cm) node[circle,inner sep=2,fill,draw] (1) {};
\draw (10.15cm,0.5cm) node[circle,inner sep=2,fill,draw] (2) {};
\draw (10.15cm,1.5cm) node[circle,inner sep=2,fill,draw] (3) {};
\draw (10.15cm,-1cm) node (4) {V};

\draw[-stealth,line width=0.4mm] (2) -- (1);
\draw[-stealth,line width=0.4mm] (3) -- (2);
\end{tikzpicture}
\end{center}
The games based on these graphs are played in a natural way. Only the
\GJLS\ arising from Graph V requires some more explanation. Assume
that the hands are $\{\mathscr{H}_1, \mathscr{H}_2, \mathscr{H}_3\}$,
where ${\mathscr{H}_i}$ corresponds to the $i$-th node from the top
to the bottom on Graph V. Then all WOD sets are $\{\mathscr{H}_1,
\mathscr{H}_2, \mathscr{H}_3\}$ (with $\mathscr{H}_1$ the winning
hand), $\{\mathscr{H}_1, \mathscr{H}_2\}$ (with $\mathscr{H}_1$ the
winning hand) and $\{\mathscr{H}_2, \mathscr{H}_3\}$ (with
$\mathscr{H}_2$ the winnings hand). In particular, note that we
define $\{\mathscr{H}_1, \mathscr{H}_3\}$ as a tie (no transitivity);
for a more general definition of games arising from general dominance
graphs, see the paragraph on Janken games around the world below.

Note that the \GJLS\ defined on Graph I corresponds to \RPSLS\ and is
the only exp-game; games on all other graphs are log-games.

For instance, the \GJLS\ defined on Graph II has $\varpi_n=
1-3^{1-n}$, and (\ref{In}) has the form
\[
	\mathbb{P}(I_n=j)
	=\begin{cases}
	    3^{1-n},
		&\text{if}\ j=n;\\
		{\displaystyle 3^{-n}\binom{n}{j}}
		\left(2^{n-j}+1\right),
		&\text{if}\ 1\le j<n.
	\end{cases}
\]
The mean and the variance are then described by applying Theorem
\ref{main-thm1-log}. For the limit law, observe that
(\ref{func-eq-Al}) becomes
\[
	\tilde{A}_{\ell+1}(z)
	=\left(1+e^{-z/3}+e^{-2z/3}\right)
	\tilde{A}_{\ell}\left(\frac{z}{3}\right).
\]
Then the explicit expression in Remark \ref{exp-ll} specializes to
\[
	\tilde{A}_{\ell}(z)
	=\frac{z}{3^{\ell}}e^{-z/3^{\ell}}
	\prod_{1\le j\le \ell}\left(1+e^{-z/3^{j}}+e^{-2z/3^j}\right),
\]
and one then obtains for the sum on the right-hand side in Theorem
\ref{main-thm2-log}
\begin{align*}
	\sum_{j\ge 0}\tilde{R}_j(3^j z)
	&=ze^{-z}+ze^{-z}\sum_{k\ge 0}
	\left(e^{-3^{k}z}+e^{-2\cdot 3^{k}z}\right)
	\prod_{0\le j<k}
	\left(1+e^{-3^{j}z}+e^{-2\cdot 3^{j}z}\right)\\
	&= \frac{z}{e^{z}-1}.
\end{align*}
Note that this gives the same result as for \CTLS\ with an unbiased coin, with the only difference that $2$ is replaced by $3$ in (\ref{unbiased-ll}); for a generalization with $2$ replaced by any number $m$ see the next paragraph below.

Consider now the games defined on Graphs III and IV, which can be
seen to correspond to \CTLS\ with a biased coin whose probability of
head is either $\frac13$ (Graph III) or $\frac23$ (Graph IV). For
example, for the game defined on Graph III, we have $\varpi_n=1-
(2^n+1) 3^{-n}$ and then (\ref{In}) becomes
\[
	\mathbb{P}(I_n=j)
	=\begin{cases}{\displaystyle (2^n+1)3^{-n}},
	&\text{if}\ j=n;\\
	{\displaystyle \frac{2^{n-j}}{3^n}\binom{n}{j}},
	&\text{if}\ 1\le j<n.
	\end{cases}
\]

Finally, the \GJLS\ defined on Graph V is also equivalent to \CTLS\
with a biased coin, and $\varpi_n$ and (\ref{In}) are identical to
those of the \GJLS\ on Graph III.

In summary, there are exactly five different three-hand games defined
on connected dominance graphs, where hands are chosen uniformly at
random. One of them is an exp-game and all others are log-games.
Moreover, the exp-game is \RPSLS\ and three of the four log-games
correspond to \CTLS\ with a biased coin. Finally, the fourth log-game is
a natural extension of $\CTLS$ from two hands to three hands.

\paragraph{Janken games on acyclic cliques and \CTLS.}
We consider Janken games defined on a directed acyclic clique.
Note that such a clique contains exactly one node of out-degree
$i$ for $i=0, \dots,m-1$; see the following graphs for $m=2,\dots,6$.
\begin{center}
\begin{tikzpicture}[scale=1,transform shape]
\node[] (P1) at (0,0) {
\begin{tikzpicture}
\node [circle, fill=black,inner sep=2pt]  (n1) at (0,0) {};
\node [circle, fill=black,inner sep=2pt, xshift = 1cm]
(n2) at (n1) {};

\draw[-latex,line width=0.2mm] (n1) -- (n2);
\end{tikzpicture}
};

\node[] (P2) at (2,0) {
\begin{tikzpicture}
\node [circle, fill=black,inner sep=2pt]  (n1) at (0,0) {};
\node [circle, fill=black,inner sep=2pt, xshift = 1cm]
(n2) at (n1) {};
\node [circle, fill=black,inner sep=2pt, xshift = 0.5cm,
yshift=0.866cm]  (n3) at (n1) {};

\draw[-latex,line width=0.2mm] (n1) -- (n2);
\draw[-latex,line width=0.2mm] (n3) -- (n2);
\draw[-latex,line width=0.2mm] (n3) -- (n1);
\end{tikzpicture}
};

\node[] (P3) at (4,0) {
\begin{tikzpicture}
\node [circle, fill=black,inner sep=2pt]  (n1) at (0,0) {};
\node [circle, fill=black,inner sep=2pt, xshift = 1cm]
(n2) at (n1) {};
\node [circle, fill=black,inner sep=2pt, xshift = 0cm, yshift=1cm]
(n3) at (n1) {};
\node [circle, fill=black,inner sep=2pt, xshift = 1cm, yshift=1cm]
(n4) at (n1) {};

\draw[-latex,line width=0.2mm] (n1) -- (n2);
\draw[-latex,line width=0.2mm] (n2) -- (n3);
\draw[-latex,line width=0.2mm] (n2) -- (n4);
\draw[-latex,line width=0.2mm] (n4) -- (n3);
\draw[-latex,line width=0.2mm] (n1) -- (n4);
\draw[-latex,line width=0.2mm] (n1) -- (n3);
\end{tikzpicture}
};

\node[] (P4) at (6,0) {
\begin{tikzpicture}[]
\node [circle, fill=black,inner sep=2pt]
(n1) at (0,0.66666) {};
\node [circle, fill=black,inner sep=2pt]
(n2) at (-0.63403, 0.206) {};
\node [circle, fill=black,inner sep=2pt]
(n3) at (-0.39185, -0.53934) {};
\node [circle, fill=black,inner sep=2pt]
(n4) at (0.39185, -0.53934) {};
\node [circle, fill=black,inner sep=2pt]
(n5) at (0.63403, 0.206) {};

\draw[-angle 45,-latex,line width=0.2mm] (n1) -- (n2);
\draw[-angle 45,-latex,line width=0.2mm] (n2) -- (n3);
\draw[-angle 45,-latex,line width=0.2mm] (n3) -- (n4);
\draw[-angle 45,-latex,line width=0.2mm] (n4) -- (n5);
\draw[-angle 45,-latex,line width=0.2mm] (n1) -- (n5);
\draw[-angle 45,-latex,line width=0.2mm] (n1) -- (n3);
\draw[-angle 45,-latex,line width=0.2mm] (n1) -- (n4);
\draw[-angle 45,-latex,line width=0.2mm] (n2) -- (n4);
\draw[-angle 45,-latex,line width=0.2mm] (n2) -- (n5);
\draw[-angle 45,-latex,line width=0.2mm] (n3) -- (n5);
\end{tikzpicture}
};

\node[] (P5) at (8,0) {
\begin{tikzpicture}[]
\node [circle, fill=black,inner sep=2pt]
(n1) at (0.66666, 0) {};
\node [circle, fill=black,inner sep=2pt]
(n2) at (0.33333, 0.57734) {};
\node [circle, fill=black,inner sep=2pt]
(n3) at (-0.33333, 0.57734) {};
\node [circle, fill=black,inner sep=2pt]
(n4) at (-0.66666, 0) {};
\node [circle, fill=black,inner sep=2pt]
(n5) at (-0.33333, -0.57734) {};
\node [circle, fill=black,inner sep=2pt]
(n6) at (0.33333, -0.57734) {};
\draw[-angle 45,-latex,line width=0.2mm] (n1) -- (n2);
\draw[-angle 45,-latex,line width=0.2mm] (n2) -- (n3);
\draw[-angle 45,-latex,line width=0.2mm] (n3) -- (n4);
\draw[-angle 45,-latex,line width=0.2mm] (n4) -- (n5);
\draw[-angle 45,-latex,line width=0.2mm] (n5) -- (n6);
\draw[-angle 45,-latex,line width=0.2mm] (n1) -- (n6);
\draw[-angle 45,-latex,line width=0.2mm] (n1) -- (n3);
\draw[-angle 45,-latex,line width=0.2mm] (n1) -- (n4);
\draw[-angle 45,-latex,line width=0.2mm] (n1) -- (n5);
\draw[-angle 45,-latex,line width=0.2mm] (n2) -- (n4);
\draw[-angle 45,-latex,line width=0.2mm] (n2) -- (n5);
\draw[-angle 45,-latex,line width=0.2mm] (n2) -- (n6);
\draw[-angle 45,-latex,line width=0.2mm] (n3) -- (n5);
\draw[-angle 45,-latex,line width=0.2mm] (n3) -- (n6);
\draw[-angle 45,-latex,line width=0.2mm] (n4) -- (n6);
\end{tikzpicture}
};
\end{tikzpicture}	
\end{center}

The \GJLS\ (played in the natural way) on such a directed acyclic graph is a natural extension of
\CTLS\ with an unbiased coin, and the \GJLS\ from the previous
paragraph on Graph II. The functional equation for the moments becomes
\[
	\tilde{f}(z)
	=\left(1+e^{-z/m}+e^{-2z/m}+\cdots+e^{-(m-1)z/m}\right)
	\tilde{f}\left(\frac{z}{m}\right)+\tilde{g}(z).
\]
Thus the same normalization technique as that used for \CTLS\ with an
unbiased coin applies, and the Fourier coefficients in Theorem
\ref{main-thm1-log} can be made more explicit.

Moreover, for the
asymptotic distribution, we have
\[
	\tilde{A}_{\ell+1}(z)
	=\left(1+e^{-z/m}+e^{-2z/m}+\cdots+e^{-(m-1)z/m}\right)
	\tilde{A}_{\ell}\left(\frac{z}{m}\right),
\]
which gives again the same form as (\ref{unbiased-ll}) but
with $2$ there replaced by $m$.

\paragraph{Regular tournament Janken games.}\label{reg-Janken} A
\emph{tournament} is a directed graph (digraph) obtained by assigning
a direction for every edge in a complete graph. A
\emph{regular tournament} of $2m+1$ nodes is a tournament in which
every node dominates exactly $m$ other nodes and is dominated by the
remaining $m$ nodes.

Typical examples of regular tournaments are the dominance graphs of
\RPSLS\ and the rock-paper-scissors-Spock-lizard game. Also the
dominance graph of the rock-paper-scissors-well game is a
subtournament of the rock-paper-scissors-Spock-lizard game. Note that
up to isomorphism there is only one regular tournament of $5$ nodes,
two regular tournaments of $7$ nodes, and the number increases very
rapidly with the number of nodes. The case of regular tournaments on
infinitely many nodes also makes sense and will be briefly discussed
in Section~\ref{con}.

We consider a class of Janken games arising from a regular tournament
generalizing the RPS game. The hands $\{\mathscr{H}_1, \ldots,
\mathscr{H}_{2m+1}\}$ correspond to the nodes of the regular
tournament. For instance, the hands can be ordered cyclically such
that $\mathscr{H}_i$ dominates $\mathscr{H}_j$ for $j-i\equiv \{1,
\ldots, m\} \bmod (2m+1)$. The game is played as follows: if the $n$
players choose the hands $S=\{\mathscr{H}_{i_1}, \ldots,
\mathscr{H}_{i_{\ell}}\}$, then the game is in a tie if and only if
either of the following condition holds:
\begin{itemize}
	\item $S$ is a singleton, or
	\item the induced subgraph of $S$ contains a cycle.
\end{itemize}
Otherwise the game is in a WOD situation.

Since by definition, the set of all hands is a tie, this is an
exp-game. Thus the cost of this Janken game is described by Theorem
\ref{main-thm-exp}. For instance, if $p_i=1/(2m+1)$ for $1\le i\le
2m+1$, then $\rho=(m+1)/(2m+1)$ and $\nu=2m+1$. On the other hand, if
we do not count rounds in which the induced subgraph of $S$ contains
a cycle, then the number of steps needed is predicted by Theorem
\ref{main-thm1-log} and Theorem \ref{main-thm2-log}; see Section
\ref{ext} where counting without ties is considered in more
generality.

\paragraph{Some other Janken games around the world.} Many variants of
RPS are played all over the world. Examples include (i) the
rock-paper-scissors-well game in Germany; (ii) the
bird-stone-revolver-plank-water game in Malaysia, and (iii) the
god-chicken-rifle-termite-fox game in Guangdong province (in China);
see the following figure for an illustration of (ii) and (iii). See
also \cite{MaUe}.
\begin{figure}
\begin{center}
\begin{tikzpicture}
\draw (-7cm,4.31cm) node (11) {B};
\draw (-9.27cm,2.66cm) node (12) {S};
\draw (-8.4cm,0cm) node (13) {R};
\draw (-5.6cm,0cm) node (14) {P};
\draw (-4.73cm,2.66cm) node (15) {W};

\draw[-latex,line width=0.4mm] (11) -- (15);
\draw[-latex,line width=0.4mm] (12) -- (11);
\draw[-latex,line width=0.4mm] (12) -- (14);
\draw[-latex,line width=0.4mm] (13) -- (11);
\draw[-latex,line width=0.4mm] (13) -- (12);
\draw[-latex,line width=0.4mm] (13) -- (14);
\draw[-latex,line width=0.4mm] (14) -- (11);
\draw[-latex,line width=0.4mm] (14) -- (15);
\draw[-latex,line width=0.4mm] (15) -- (12);
\draw[-latex,line width=0.4mm] (15) -- (13);

\draw (0cm,4.31cm) node (1) {G};
\draw (-2.27cm,2.66cm) node (2) {C};
\draw (-1.4cm,0cm) node (3) {R};
\draw (1.4cm,0cm) node (4) {T};
\draw (2.27cm,2.66cm) node (5) {F};

\draw[-latex,line width=0.4mm] (1) -- (2);
\draw[-latex,line width=0.4mm] (1) -- (3);
\draw[-latex,line width=0.4mm] (2) -- (4);
\draw[-latex,line width=0.4mm] (3) -- (2);
\draw[-latex,line width=0.4mm] (3) -- (5);
\draw[-latex,line width=0.4mm] (4) -- (1);
\draw[-latex,line width=0.4mm] (5) -- (2);

\draw (-7cm,-0.8cm) node (1){\small Bird, Stone, Revolver,
Plank, Water};
\draw (-7cm,-1.5cm) node (3) {\small (MALAYSIA)};
\draw (0.1cm,-0.8cm) node (4){\small God, Chicken, Rifle, Termite, Fox};
\draw (0.1cm,-1.5cm) node (5) {\small (Guangdong, CHINA)};
\end{tikzpicture}
\end{center}
\vspace*{-.5cm}
\end{figure}

All the three games (i)--(iii) are mainly played as two-person games.
They can be generalized to $n$-person games. More precisely, assume
that the hands are given by $\{\mathscr{H}_1, \ldots,
\mathscr{H}_m\}$. If the $n$ players choose the hands
$S=\{\mathscr{H}_{i_1}, \ldots,\mathscr{H}_{i_{\ell}}\}$, then we
consider the subgraph induced by these hands. The game is in a tie if
and only if either the subgraph consists only of isolated nodes or
there is a cycle in one of the connected components of the subgraph.
(Note that in contrast to the regular tournament Janken game, induced
subgraphs are now not necessarily connected). Thus, if $S$ is a WOD
set (i.e., not a tie), then the induced subgraph has at least one
node with only outgoing edges. All these nodes together with the
isolated nodes form the set of winning hands; the remaining hands are
the losing hands.

As an example, assume that in the
god-chicken-rifle-termite-fox game, the players choose the hands
$\{G,C,F\}$. Then this set is a WOD set where $\{G,F\}$ are the
winning hands and $\{C\}$ is the losing hand.

The games (i)--(iii) are all of exponential type since there are cycles
in the dominance graphs. Thus the number of rounds is described by
Theorem \ref{main-thm-exp}. For instance, if hands are chosen
uniformly at random, then one obtains the following game indices:

\vspace*{0.05cm}
\begin{center}
\setlength\extrarowheight{2.5pt}
\begin{tabular}{|l||c|c|}
\hline
\multicolumn{1}{|c||}{\GJLS} & $\rho$ & $\nu$ \\
\hline
\hline
(i) rock-paper-scissors-well & $\frac34$ & $2$ \\
(ii) rock-stone-revolver-plank-water & $\frac45$ & $1$ \\
(iii) god-chicken-rifle-termite-fox & $\frac45$ & $3$ \\[2.5pt]
\hline
\end{tabular}
\end{center}

\paragraph{Games on a circulant payoff matrix.}
These games where introduced in \cite{SuOs} for selecting a leader in
a distributed system. In contrast to all the previous games, they are
not defined via an underlying dominance graph. We first recall their
definition.

The games are played with $2m+1$ hands $\{\mathscr{H}_1, \ldots,
\mathscr{H}_{2m+1}\}$, where for $1\le i,j\le 2m+1$ with $i\ne j$,
the hand $\mathscr{H}_i$ gets a payoff of $2^{m+g(i,j)}$ from
$\mathscr{H}_j$. Here $i-j\equiv g(i,j) \bmod (2m+1)$ with $-m\le
g(i,j)\le m$. If the players choose the hands $S
=\{\mathscr{H}_{i_1}, \ldots, \mathscr{H}_{i_{\ell}}\}$, then the
total gain of $\mathscr{H}_{i_j}$ is defined as
\[
    G_{i_{j},S}=\sum_{\mathscr{H}_{i_k}\in S}2^{m+g(i_j,i_k)}.
\]
Denote by $S_M$ the subset of hands from $S$ with the maximal total
gain. Then $S$ is a WOD set with winning hands $S_M$ and
losing hands $S\setminus S_M$ if and only if $\#S_M<\#S$.

Examples of a three-hand game and a five-hand game defined on
circulant payoff matrices are given in Figure \ref{3-fig}. Note that
$m=1$ corresponds to \RPSLS. On the other hand, the game with $m=2$
is different from the rock-paper-scissors-Spock-lizard game since
there are more WOD sets in the former. This is also the reason why
the five-hand circulant payoff matrix game was described as more
efficient than the five-hand regular tournament Janken game in
\cite{SuOs}.

\begin{table}[!ht]
\begin{center}
\begin{tabular}{c|ccc}
    hands &
	rock & paper & scissors \\ \hline
    rock & $2^1$ & $2^0$ & $2^2$ \\
    paper & $2^2$ & $2^1$& $2^0$ \\
    scissors & $2^0$ & $2^2$ & $2^1$
\end{tabular}\quad
\begin{tabular}{c|ccccc}
    hands & $\mathscr{H}_1$
	& $\mathscr{H}_2$ & $\mathscr{H}_3$
	& $\mathscr{H}_4$ & $\mathscr{H}_5$ \\ \hline
    $\mathscr{H}_1$ & $2^2$ & $2^1$ & $2^0$ & $2^4$ & $2^3$ \\
    $\mathscr{H}_2$ & $2^3$ & $2^2$ & $2^1$ & $2^0$ & $2^4$ \\
    $\mathscr{H}_3$ & $2^4$ & $2^3$ & $2^2$ & $2^1$ & $2^0$ \\
    $\mathscr{H}_4$ & $2^0$ & $2^4$ & $2^3$ & $2^2$ & $2^1$ \\
    $\mathscr{H}_5$ & $2^1$ & $2^0$ & $2^4$ & $2^3$ & $2^2$
\end{tabular}
\end{center}
\vspace*{-.4cm}
\caption{\emph{Three-hand (left) and five-hand (right) circulant
payoff matrix game.}}\label{3-fig}
\end{table}

The set of all hands in a circulant payoff matrix game is obviously a
tie because each hand has the same total gain. Thus, circulant
payoff matrix games are of exponential type and their complexity
are described by Theorem \ref{main-thm-exp}. For instance, if
$p_i=1/(2m+1)$ for $1\le i\le 2m+1$, then $\rho=2m/(2m+1)$ and
$\nu=2m+1$. This generalizes the observation made in \cite{SuOs},
namely, \emph{$(2m+1)$-hand circulant payoff matrix games are more
efficient than $(2m+1)$-hand regular tournament games.}

\section{Extensions and Other Gap Theorems}\label{ext}

Another equally important cost measure for \GJLS\ is the total number
of hands used until a leader is selected, which corresponds to the
number of times the random variate is generated if the hands are
generated by random mechanisms. We show that instead of the log-exp
complexity change, there is a change from linear to exponential for
the expected cost; also the limit law exists in either case (log or
exp).

On the other hand, we saw, from the above analysis, that ties play a
crucial role in distinguishing between log-games and exp-games. We
examine in this section the contribution of ties in slightly more
detail. More precisely, we discuss the number of rounds until a
leader is selected when ties are ignored.

\subsection{The total number of hands}

We consider here the total number of hands $Y_n$ used by all players
before finding a leader by \GJLS. It turns out that $Y_n$
exhibits a scale change from linear ($\rho=1$) to exponential
($\rho<1$).

We begin with the case $\rho=1$ for which we have the recurrence
(cf. \eqref{dis-rec-1})
\[
    Y_n\stackrel{d}{=}Y_{I_n}+n,\qquad (n\ge 2),
\]
where $Y_n,I_n$ are independent, $Y_1=0$ and $I_n$ is defined
in \eqref{In}.

While the asymptotic distribution of $X_n$ is
dictated by periodic oscillations, the distribution of $Y_n$ follows
a central limit theorem.
\begin{thm}[Log-games]
The total number of hands $Y_n$ used by \GJLS\ to find a leader
in log-games ($\rho=1$) is asymptotically normally distributed
\[
	\frac{Y_n-\mathbb{E}(Y_n)}{\sqrt{\mathbb{V}(Y_n)}}
	\stackrel{d}{\longrightarrow} \mathscr{N}(0,1),
\]
where $\mathscr{N}(0,1)$ denotes the standard normal distribution. Furthermore, the mean and the variance satisfy
\begin{align}\label{EV-Yn}
\begin{split}
	\mathbb{E}(Y_n)
	&=\frac{1}{1-\alpha}\,n+P_3(\log_{1/\alpha} n)+o(1),\\
    \mathbb{V}(Y_n)&=\frac{\alpha}{(1-\alpha)^2}\,n+
	P_4(\log_{1/\alpha}n)\log n +O(1),
\end{split}	
\end{align}
where $P_3(z),P_4(z)$ are one-periodic functions.
\end{thm}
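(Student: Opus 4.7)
The plan is to treat the recurrence $Y_n\stackrel{d}{=}Y_{I_n}+n$ as a linear divide-and-conquer recursion with a deterministic linear toll, and to split the analysis into (a) moment asymptotics via Poisson generating functions and Mellin transforms, in the same analytic style as used for $X_n$ in Section~\ref{log-case}, and (b) the Gaussian limit via a contraction argument.

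For the mean, I would introduce the Poisson generating function $\tilde{f}_1(z):=e^{-z}\sum_{n\ge1}\mathbb{E}(Y_n)z^n/n!$. Multiplying the recurrence by $z^n/n!$, summing, and Poissonizing, the same bookkeeping as in the derivation of~\eqref{f1z-PGF} produces a functional equation
\[
    \tilde{f}_1(z)=\tilde{f}_1(\alpha z)+z+\tilde{\phi}_1(z),
\]
with $\tilde{\phi}_1$ exponentially small in $\mathscr{C}_\varepsilon$ (all WOD sets distinct from $\{\mathscr{H}_1,\ldots,\mathscr{H}_m\}$ have total hand-probability strictly less than $1$, and $1-\varpi_n$ is itself exponentially small). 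Iterating yields $\tilde{f}_1(z)=z/(1-\alpha)+\sum_{k\ge0}\tilde{\phi}_1(\alpha^k z)$; the harmonic sum admits a Mellin transform with simple poles on the imaginary axis at $\chi_k=2k\pi i/L$, producing the bounded periodic $P_3(\log_{1/\alpha}z)$ via an analogue of Proposition~\ref{asym-gen-func-eq}. JS-admissibility (Lemma~\ref{transfer-JS}) together with the de-Poissonization formula~\eqref{poi-heu} then translate this into the claimed expansion for $\mathbb{E}(Y_n)$. For the variance I would run the same scheme on $\tilde{f}_2(z)$, using $(Y_{I_n}+n)^2=Y_{I_n}^2+2nY_{I_n}+n^2$, and then on $\tilde{V}(z):=\tilde{f}_2(z)-\tilde{f}_1(z)^2$: the leading linear term $\alpha n/(1-\alpha)^2$ comes from the binomial-type variance $\alpha(1-\alpha)n$ of $I_n$ plugged into the conditional variance formula, while the interaction between the periodic part of $\mu_n$ and the linear fluctuations of $I_n$ creates a double Mellin pole at $s=0$ and generates the $P_4(\log_{1/\alpha}n)\log n$ correction.

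For the Gaussian limit I would invoke the contraction method. Writing $\mu_n=\mathbb{E}(Y_n)$, $\sigma_n^2=\mathbb{V}(Y_n)$, and $Y_n^*:=(Y_n-\mu_n)/\sigma_n$, the distributional recurrence becomes
\[
    Y_n^*\stackrel{d}{=}\frac{\sigma_{I_n}}{\sigma_n}\,Y_{I_n}^*+b_n,\qquad b_n:=\frac{\mu_{I_n}+n-\mu_n}{\sigma_n}.
\]
In a log-game, conditioning on the no-tie event (of probability $1-O(e^{-cn})$), $I_n$ is a conditioned multinomial sum with $I_n/n\to\alpha$ almost surely and $(I_n-\alpha n)/\sqrt{n}\stackrel{d}{\longrightarrow}\mathscr{N}(0,\alpha(1-\alpha))$, so $\sigma_{I_n}/\sigma_n\to\sqrt{\alpha}$ in probability and $b_n\stackrel{d}{\longrightarrow}\mathscr{N}(0,1-\alpha)$. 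The Neininger--R\"uschendorf contraction framework, applied in the Zolotarev metric $\zeta_3$ (the contraction condition $\alpha^{3/2}<1$ being trivially satisfied), then delivers $Y_n^*\stackrel{d}{\longrightarrow}Y^*$, where
\[
    Y^*\stackrel{d}{=}\sqrt{\alpha}\,\bar{Y}^*+\xi,\qquad \xi\sim\mathscr{N}(0,1-\alpha),\ \xi\perp\bar{Y}^*,
\]
and $\bar{Y}^*$ is an independent copy of $Y^*$. The unique mean-zero, unit-variance solution of this distributional fixed-point equation is $\mathscr{N}(0,1)$.

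The hardest step, in my view, is the variance expansion: isolating the unusual $P_4(\log_{1/\alpha}n)\log n$ correction requires tracking carefully how the periodic part of $\mu_n$ interferes, through the conditional variance formula, with the linear-in-$n$ fluctuations of $I_n$ --- equivalently, locating a double pole at $s=0$ in the Mellin transform for $\tilde{V}(z)$. The contraction step itself should then be routine once the first- and second-order moment asymptotics are in place; what will need attention there is discarding the exponentially small tie event cleanly and verifying uniform integrability of $(Y_n^*)^3$ so as to legitimately work in $\zeta_3$.
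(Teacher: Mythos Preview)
Your overall plan matches the paper's sketch: the functional equation $\tilde{f}_1(z)=\tilde{f}_1(\alpha z)+z+\tilde{\phi}_1(z)$ is exactly what the paper derives, and for the Gaussian limit the paper explicitly lists the contraction method (alongside the method of moments with shifting-the-mean) as a valid route, so your Neininger--R\"uschendorf argument is on target.

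There is, however, a genuine gap in your variance step. Because $\mathbb{E}(Y_n)$ is now of \emph{linear} order (unlike the logarithmic $\mathbb{E}(X_n)$ in Section~\ref{log-case}), the naive Poissonized variance $\tilde{V}(z):=\tilde{f}_2(z)-\tilde{f}_1(z)^2$ that you propose does \emph{not} de-Poissonize to $\mathbb{V}(Y_n)+O(1)$. Concretely, with your definition one gets $\tilde{V}(z)=\tilde{V}(\alpha z)+2\alpha z\tilde{f}_1'(\alpha z)+z+\text{(e.s.)}$, whence $\tilde{V}(n)\sim(1+\alpha)n/(1-\alpha)^2$, off from the true variance by exactly the Poisson correction $n\tilde{f}_1'(n)^2\sim n/(1-\alpha)^2$. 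Your conditional-variance heuristic correctly predicts $\alpha n/(1-\alpha)^2$ for $\mathbb{V}(Y_n)$, but your analytic object will not deliver it. The paper resolves this by using the \emph{corrected} Poissonized variance from \cite{HwFuZa},
\[
    \tilde{V}(z):=\tilde{f}_2(z)-\tilde{f}_1(z)^2-z\tilde{f}_1'(z)^2,
\]
which then satisfies the clean equation
\[
    \tilde{V}(z)=\tilde{V}(\alpha z)+\alpha(1-\alpha)z\,\tilde{f}_1'(\alpha z)^2+\tilde{\phi}_3(z),
\]
and de-Poissonizes correctly. (Note also that the paper's functional equation for $\tilde{f}_2$ contains the extra term $2\alpha z\tilde{f}_1'(\alpha z)$, arising because the toll $n$ multiplies a quantity depending on $n$ under Poissonization; your expansion of $(Y_{I_n}+n)^2$ does not by itself make this visible.) Once you switch to the corrected $\tilde{V}$, the toll $\alpha(1-\alpha)z\tilde{f}_1'(\alpha z)^2$ is asymptotically constant plus periodic, and Proposition~\ref{asym-gen-func-eq}(ii) produces the $P_4(\log_{1/\alpha}n)\log n$ term via the double pole you anticipated.
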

\begin{proof}(Sketch)
As in Section~\ref{log-case}, we consider
\[
	\tilde{f}_1(z)
	:=e^{-z}\sum_{n\ge 1}\mathbb{E}(Y_n)\frac{z^n}{n!}
	\qquad\text{and}\qquad\tilde{f}_2(z)
	:=e^{-z}\sum_{n\ge 1}\mathbb{E}(Y_n^2)\frac{z^n}{n!},
\]
which, by a similar computation as in Section \ref{log-case},
satisfy the functional equations
\begin{align*}
    \tilde{f}_1(z)&=\tilde{f}_1(\alpha z)+z+\tilde{\phi}_1(z)\\
	\tilde{f}_2(z)&=\tilde{f}_2(\alpha z)
	+2z\tilde{f}_1(\alpha z)
	+2\alpha z\tilde{f}_1'(\alpha z)
	+z^2+z+\tilde{\phi}_2(z),
\end{align*}
where $\tilde{\phi}_1(z)$ and $\tilde{\phi}_2(z)$ are finite
sums of exponentially small terms.

The proof of \eqref{EV-Yn} then follows the same line of arguments
as that of Theorem~\ref{main-thm1-log} except for the variance for
which we need to consider the Poissonized variance (see \cite{HwFuZa})
\[
    \tilde{V}(z):=\tilde{f}_2(z)
	-\tilde{f}_1(z)^2-z\tilde{f}'_1(z)^2,
\]
which then satisfies the equation
\[
	\tilde{V}(z)=\tilde{V}(\alpha z)+\alpha(1-\alpha)z
	\tilde{f}'_1(\alpha z)^2+\tilde{\phi}_3(z),
\]
where $\tilde{\phi}_3(z)$ consists of exponentially small terms.

Finally, the central limit theorem can be proved either by the method of
moments using the shifting-the-mean technique (see \cite{CH03}) or by
the contraction method \cite{NeRu}; details are omitted here.
\end{proof}

Note that for \CTLS\ with an unbiased coin, it was proved in
\cite{Pr} that $\mathbb{E}(Y_n)=2n$ ($\alpha=\frac12$). This is a
rather exceptional result which does not seem to hold in general.
Furthermore, in this case, $P_4(z)\equiv 0$.

We now consider exp-games ($\rho<1$). Here the total number of
hands used satisfies the distributional recurrence
\[
    Y_n\stackrel{d}{=}Y_{J_n}+nT_n,\qquad (n\ge 2),
\]
where $Y_n,J_n,T_n$ are independent, $T_n$ is a geometrically
distributed random variable with parameter $\varpi_n$,
$Y_1=0$ and $J_n$ is defined in \eqref{Jn}.

Then the same approach used in Section \ref{exp-case} applies and we
obtain the same exponential limit law for $Y_n$.
\begin{thm}[Exp-games]
The total number of hands $Y_n$ used to find a leader by
\GJLS\ in exp-games ($\rho<1$) satisfies
\[
    \frac{\nu\rho^n}{n}Y_n
	\stackrel{d}{\longrightarrow}{\rm Exp}(1),
\]
where ${\rm Exp}(1)$ denotes an exponentially distributed random
variable with mean one. In addition, we have convergence of all
moments. In particular, the mean satisfies
\[
    \mathbb{E}(Y_n)\sim\frac{n}{\nu\rho^n}.
\]
\end{thm}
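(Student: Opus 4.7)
The plan is to mirror the method-of-moments argument of Theorem \ref{main-thm-exp} (the exp-game result for $X_n$), substituting the recurrence $Y_n \stackrel{d}{=} Y_{J_n} + n T_n$ in place of $X_n \stackrel{d}{=} X_{J_n} + T_n$. The only structural difference is the deterministic multiplier $n$ in front of $T_n$, which introduces an extra polynomial-in-$n$ factor at each level but does not upset any of the exponential comparisons used there.

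Taking the $m$-th moment on both sides and using the independence of $Y_{J_n}, J_n, T_n$, I would obtain
$$\mathbb{E}(Y_n^m) = \mathbb{E}(Y_{J_n}^m) + \sum_{0 \le k < m} \binom{m}{k} n^{m-k} \mathbb{E}(T_n^{m-k})\, \mathbb{E}(Y_{J_n}^k),$$
with $\mathbb{E}(Y_{J_n}^k)$ unfolded via \eqref{Jn} into a sum over WOD sets exactly as in \eqref{rec-mth-mom-1}. A double induction, first on $m$ and then on $n$, using Lemma \ref{bound-Tn} and the trivial estimate $j^k \le n^k$ in each cross-term, then yields the crude upper bound $\mathbb{E}(Y_n^m) \le c_m (n \rho^{-n})^m$, in perfect analogy with the bound $\mu_{n,m} = O(\rho^{-nm})$ established for $X_n$.

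With that in hand, I would refine to the asymptotic equivalence
$$\mathbb{E}(Y_n^m) \sim \frac{m!\, n^m}{\nu^m \rho^{nm}}.$$
The dominant $k = 0$ contribution is $n^m \mathbb{E}(T_n^m) \sim m!\, n^m / (\nu^m \rho^{nm})$ by Lemma \ref{ref-toll}. For the cross terms with $k \ge 1$, the same manipulation that produced the bound $O((\tilde\rho/\rho)^n \rho^{-nm})$ in the $X_n$ proof now yields $O(n^m (\tilde\rho/\rho)^n \rho^{-nm})$, with the same constant $\tilde\rho = \max_{S = W \cup D \in \WD}\bigl(\sum_{j \in W} p_j + \rho \sum_{j \in D} p_j\bigr) < \rho$. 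Since $n^m$ grows only polynomially while $(\tilde\rho/\rho)^n$ decays geometrically, the cross-term contribution is $o(n^m \rho^{-nm})$, as required.

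Finally, since the $m$-th moments of $\nu \rho^n Y_n / n$ converge to $m!$ for every $m \ge 0$ and the exponential distribution is determined by its moments, the method of moments gives both the convergence in distribution to $\mathrm{Exp}(1)$ and the convergence of all moments; the asymptotic $\mathbb{E}(Y_n) \sim n/(\nu \rho^n)$ is the $m = 1$ instance. The main obstacle is purely bookkeeping: carrying the extra polynomial factors $n^{m-k}$ through the cross-term estimates and verifying that they are absorbed by the exponential gap $(\tilde\rho/\rho)^n$. Since any polynomial is dominated by a strictly decreasing geometric sequence, this is routine and the rest of the argument goes through verbatim.
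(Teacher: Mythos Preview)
Your proposal is correct and follows exactly the route the paper intends: the paper's own proof of this theorem is the single sentence that ``the same approach used in Section~\ref{exp-case} applies,'' and you have faithfully carried out that transfer, tracking the extra deterministic factor $n^{m-k}$ through the moment recurrence and observing that the resulting polynomial growth is absorbed by the exponential gap $(\tilde\rho/\rho)^n$ in the cross-term estimate.
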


Overall, we see that the total number of hands used by
\GJLS\ exhibits a sharp scale change from linear to exponential.

\subsection{Counting without ties}

Denote by $Z_n$ the number of rounds used by \GJLS\ until a leader is
selected where ties are ignored. Then $Z_n$ satisfies the same
recurrence as (\ref{dis-rec-2}) with $T_n$ there replaced by $1$,
namely,
\[
    Z_n\stackrel{d}{=}Z_{J_n}+1,\qquad (n\ge 2),
\]
where $Z_n, J_n$ are independent, $Z_1=0$ and $J_n$ is defined in
\eqref{Jn}. It turns out that in such a case the expected cost is
always logarithmic, independent of $\rho$. For simplicity, we
consider only the mean.

We introduce first some notations. Denote by $S_1,\ldots S_{\nu}$
the WOD sets for which the maximum is attained in (\ref{def-rho}).
Furthermore, define
\[
	\alpha_{\ell}:=\sum_{\mathscr{H}_j\in W_{\ell}}p_j,
	\qquad (1\le\ell\le\nu),
\]
where $W_{\ell}$ is the set of winning hands belonging to $S_{\ell}$.

By the same arguments used in Section \ref{log-case},
we obtain the functional equation
\[
	\tilde{f}_1(z)=\frac{1}{\nu}\sum_{1\le \ell\le \nu}
	\tilde{f}_1\left(\frac{\alpha_{\ell}}{\rho}\,z\right)
	+1+\tilde{\phi}_1(z),
\]
with $\tilde{f}_1(0)=\tilde{\phi}_1(0)=0$, where
\[
    \tilde{f}_1(z):=e^{-z}\sum_{n\ge 1}
	\mathbb{E}(Z_n)\frac{z^n}{n!},
\]
and $\tilde{\phi}_1(z)$ is a finite sum of exponentially small terms.

Asymptotics of $\mathbb{E}(Z_n)$ then follows from the same
Mellin and de-Poissonization arguments we used above. The
major difference here is that the Mellin transform of
$\tilde{f}_1(z)$ has now the denominator
\[
	1-\frac{1}{\nu}\sum_{1\le \ell\le \nu}
	\left(\frac{\rho}{\alpha_{\ell}}\right)^s,
\]
instead of $1-\alpha^{-s}$ when $\rho=1$; see (\ref{meltr-F}).

For the inverse Mellin transform, one needs to clarify the set of zeros of this function for which much has been known; see
\cite{FFH86,FlRoVa} for more information. First, it is easy to see
that all zeros must satisfy $\Re(s)>0$. Moreover, it is well-known
that $s=0$ is the only zero on the vertical line $\Re(s)=0$ if and
only if at least one of the ratios $\log(\rho/\alpha_j)/
\log(\rho/\alpha_k)$ is irrational. Note that if the latter does not
hold, then there exits an $r>1$ such that $\rho/\alpha_j
=r^{\kappa_j}$ for positive integers $\kappa_j$, and there are
infinitely many zeros on $\Re(s)=0$ that are equally spaced along
this line; for these and related properties see, e.g., \cite{FlRoVa}
and references therein.

Using this and the approach from Section \ref{log-case}, we obtain the following result.
\begin{thm}
The expected number of rounds $Z_n$, counted without ties, used by
\GJLS\ to find a leader satisfies
\[
    \mathbb{E}(Z_n)= h_\nu \log n +P(\log_r n) + o(1),
\]
where
\[
	h_\nu := \frac1{\frac{1}{\nu}\sum_{1\le \ell\le \nu}
	\log\left(\frac{\rho}{\alpha_{\ell}}\right)},
\]
and $P(z)$ is a constant if at least one of the ratios
$\log(\rho/\alpha_j)/\log(\rho/\alpha_k)$ is
irrational, and is a one-periodic function otherwise.
\end{thm}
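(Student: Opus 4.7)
The plan is to follow the Mellin-transform plus de-Poissonization machinery of Section~\ref{log-case}, with the scalar denominator $1-\alpha^{-s}$ of Proposition~\ref{asym-gen-func-eq} replaced by the Dirichlet-type polynomial
\[
D(s) := 1-\frac{1}{\nu}\sum_{1\le\ell\le\nu}
\left(\frac{\rho}{\alpha_\ell}\right)^s.
\]
First I would take the Mellin transform of the stated functional equation for $\tilde{f}_1(z)$, which yields $F^*(s)=G^*(s)/D(s)$ in a suitable vertical strip, where $G^*(s)$ collects the contributions of the ``$+1$'' and the exponentially small $\tilde{\phi}_1(z)$. Since the losing set in each $S_\ell$ is nonempty, we have $\alpha_\ell<\rho$, so $D(0)=0$ and $D'(0)=-\frac{1}{\nu}\sum_\ell\log(\rho/\alpha_\ell)<0$; thus $s=0$ is a simple zero of $D$. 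Because $\tilde{g}(z)\to 1$ as $|z|\to\infty$ (the leading constant coming from the ``$+1$''), the Direct Mapping Theorem gives a simple pole of $G^*$ at $s=0$ with residue $-1$. Hence $F^*$ has a double pole at $s=0$, which is exactly what produces a $\log n$ term in the expansion of $\tilde{f}_1$.

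Next I would determine the zero set of $D$ on the imaginary axis. Writing $D(it)=1-\frac{1}{\nu}\sum_\ell e^{it\log(\rho/\alpha_\ell)}$, the triangle inequality together with strict convexity of the unit disk forces $t\log(\rho/\alpha_\ell)\in 2\pi\mathbb{Z}$ for every $\ell$ at any such zero. This leaves only $t=0$ unless all ratios $\log(\rho/\alpha_j)/\log(\rho/\alpha_k)$ are rational; in the commensurable case, writing $\rho/\alpha_\ell=r^{\kappa_\ell}$ with $\kappa_\ell\in\mathbb{Z}_{>0}$, the zeros on $\Re(s)=0$ are exactly $s_k=2\pi i k/\log r$ ($k\in\mathbb{Z}$), all simple with $D'(s_k)=D'(0)$. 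A standard inverse-Mellin contour shift past $\Re(s)=0$ then produces the expansion. At the double pole $s=0$, combining $G^*(s)=-1/s+d+O(s)$, $1/D(s)=1/(D'(0)s)+O(1)$, and $z^{-s}=1-s\log z+O(s^2)$ yields a residue contribution of the form $-\log z/D'(0)+\text{const}=h_\nu\log z+\text{const}$, matching the value of $h_\nu$ in the theorem. In the commensurable case, the remaining simple poles at $s_k$ ($k\ne 0$) contribute residues proportional to $z^{-s_k}$, whose sum is a $1$-periodic function of $\log_r z$; in the incommensurable case no such terms appear and $P$ reduces to the constant coming from the $s=0$ residue. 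A final de-Poissonization via an analogue of Lemma~\ref{transfer-JS} (which yields $\tilde{f}_1\in\JS$) and \eqref{poi-heu} then transfers the asymptotics to $\mathbb{E}(Z_n)=\tilde{f}_1(n)+o(1)$.

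The main technical obstacle I expect is justifying the contour shift in the commensurable case, where countably many simple poles lie on $\Re(s)=0$ and the resulting residue series must be shown to converge and to equal the shifted contour integral. This requires a uniform lower bound for $|D(s)|$ on suitably perturbed contours together with the exponential decay of $G^*(s)$ along vertical lines (in the spirit of the Exponential Smallness Lemma of \cite{FuHwZa}). These are classical ingredients in the analysis of Dirichlet polynomials with commensurable exponents and are available in \cite{FFH86,FlRoVa}; once in place, the rest of the argument is routine bookkeeping paralleling the proofs of Theorems~\ref{main-thm1-log} and~\ref{main-thm2-log}.
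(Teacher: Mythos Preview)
Your proposal is correct and follows essentially the same route as the paper: derive the functional equation for $\tilde{f}_1$, take Mellin transforms to obtain the denominator $D(s)=1-\frac{1}{\nu}\sum_\ell(\rho/\alpha_\ell)^s$, analyze its zeros on $\Re(s)=0$ via the rational/irrational dichotomy of \cite{FFH86,FlRoVa}, extract the double pole at $s=0$ to produce the $h_\nu\log n$ term and the simple poles at $s_k$ for the periodic part, and finish by de-Poissonization. The paper's own argument is only a sketch pointing to these same ingredients, and you have filled in the residue computation and the contour-shift justification more explicitly than the paper does.
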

Thus ties dominate in an exp-game.

\section{A concluding remark}\label{con}

In this paper, we discussed leader selection based on generalized
Janken games. Our framework contains as special cases the classical
leader selection procedure using coin-tossing, which is widely used
in computer science and related areas, and the popular two-person RPS
game (and its variants), which is played in many countries and is of
importance in game theory, biology and physics. We showed that leader
selection based on the latter as well as many other previous examples
of Janken games exhibit only dichotomous behaviors: either very
efficient of log-order or very laborious of exp-order.

We conclude this paper with a remark on an infinite-hand Janken game
generalizing the regular tournament Janken game introduced in Section
\ref{reg-Janken}.

Assume that the $n$ players choose points uniformly at random on the
surface of the unit sphere in the ${\ell}$-space. Then the
probability that all points lie on the same hemisphere \cite{Wen} is
\[
    2^{1-n}\sum_{0\le j<\ell}\binom{n-1}{j}.
\]
Consider the unit circle ($\ell=2$). Assume that two players
choose points $P$ and $Q$, respectively. We define the dominance
relation according to the clockwise arc-length: if the clockwise
arc-length from $Q$ to $P$ is larger than the counter-clockwise one,
then we say that $Q$ dominates $P$ (or $Q$ wins); see \cite{ItMaTo}.
Obviously, there is a winner whenever all players choose points lying
on the same semicircle; otherwise, the game is in a tie. The
expected time until a winner is selected is $1/p-1$, where $p$ is the
probability that all points lie on the same semicircle. By the above
result, the latter probability is given by $p=n/2^{n-1}$ and,
consequently, the expected time is given by
\[
    n^{-1} 2^{n-1} -1.
\]
This is again a game of exponential type. Note that its discrete
version is the regular tournament Janken game with  $p_i=1/(2m+1)$
for $1\le i\le m$.

\end{document}